\def\inc{37.5}
\newtheorem{thm}{Theorem}[section]
\newtheorem{prop}[thm]{Proposition}
\newtheorem{lem}[thm]{Lemma}
\newtheorem{cor}[thm]{Corollary}
\newtheorem{conj}[thm]{Conjecture}
\numberwithin{equation}{section}
\theoremstyle{definition}
\newtheorem{remark}[thm]{Remark}
\newtheorem{ex}[thm]{Examples}
\DeclareSymbolFont{cyrletters}{OT2}{wncyr}{m}{n}
\DeclareMathSymbol{\Sha}{\mathalpha}{cyrletters}{"58}
\newcommand{\Gr}{{\rm Gr}}
\newcommand{\nilp}{{\rm nilp}}
\newcommand{\cal}{\mathcal}
\newcommand{\kx}{{\cal X}}
\newcommand{\ZZ}{\mathbb{Z}}
\newcommand{\QQ}{\mathbb{Q}}
\newcommand{\CC}{\mathbb{C}}
\newcommand{\PP}{\mathbb{P}}
\renewcommand{\to}{\xymatrix@1@=15pt{\ar[r]&}}
\renewcommand{\rightarrow}{\xymatrix@1@=15pt{\ar[r]&}}
\renewcommand{\leftarrow}{\xymatrix@1@=15pt{&\ar[l]}}
\renewcommand{\mapsto}{\xymatrix@1@=15pt{\ar@{|->}[r]&}}
\renewcommand{\twoheadrightarrow}{\xymatrix@1@=18pt{\ar@{->>}[r]&}}
\renewcommand{\hookrightarrow}{\xymatrix@1@=15pt{\ar@{^(->}[r]&}}
\newcommand{\hook}{\xymatrix@1@=15pt{\ar@{^(->}[r]&}}
\newcommand{\congpf}{\xymatrix@L=0.6ex@1@=15pt{\ar[r]^-\sim&}}
\renewcommand{\cong}{\simeq}
\begin{document}

\title[]{On type II degenerations of hyperk\"ahler manifolds}
\author[D.\ Huybrechts \& M.\ Mauri]{D.\ Huybrechts \& M.\ Mauri}

\address{Mathematisches Institut and Hausdorff Center for Mathematics, Universit\"at Bonn, Endenicher Allee 60, 53115 Bonn, Germany \& Max Planck Institute for Mathematics, Vivatsgasse 7, 53111 Bonn, Germany}
\email{huybrech@math.uni-bonn.de \& mauri@mpim-bonn.mpg.de}

\begin{abstract} \noindent
We give a simple argument to prove Nagai's conjecture for type II degenerations of compact hyperk\"ahler
manifolds and cohomology classes of middle degree. Under an additional assumption, the techniques
yield the conjecture in arbitrary degree. This would complete the proof of Nagai's conjecture in general,
as it was proved already for type I degenerations by Koll\'ar, Laza, Sacc\`a, and Voisin \cite{KLSV} and independently by Soldatenkov \cite{SoldMoscow}, while it is immediate for type III degenerations. Our arguments are close in spirit to a recent paper by Harder \cite{Harder} proving similar results for 
the restrictive class of \emph{good} degenerations.
 \vspace{-2mm}
\end{abstract}

\maketitle
{\let\thefootnote\relax\footnotetext{The first author is supported by the 
ERC Synergy Grant HyperK. The second author is supported by the Max Planck Institute for Mathematics.}
\marginpar{}
}
\section{Introduction}

Any one-dimensional degeneration $\kx\to\Delta$ of compact K\"ahler manifolds induces monodromy
operators $T_k$ acting on the cohomology groups  $H^k(X,\CC)$ of a smooth fibre $X\coloneqq\kx_t$, ${t\ne0}$. After a base change, one can assume that $T_k$ is unipotent, i.e.\ the operator $T_k-{\rm id}$ is nilpotent.
Alternatively, this can be expressed by saying that the logarithmic monodromy operator
$N_k\coloneqq \log T_k$ acting on $H^k(X,\CC)$ is nilpotent. Thus, the \emph{index of nilpotence}  defined
as $$\nilp(N_k)\coloneqq\max \{ i\mid N^i\ne0\}$$
is finite and it is known  that $\nilp(N_k)\leq k$, see \cite[Ch.\ IV]{Griffiths}. Clearly, if
$\kx\to\Delta$ is a smooth family, then $N_k=0$ for all $k$. In general, there
is no direct link between the $\nilp(N_k)$ for different $k$.
However, following the general philosophy that the geometry of a compact hyperk\"ahler manifold $X$ is largely determined by its Hodge structure of weight two $H^2(X,\ZZ)$,
it was conjectured by Nagai \cite{Nagai} that $N_2$ determines all even $N_{2k}$.

\begin{conj}[Nagai]\label{conj:main}
The even logarithmic monodromy operators
$N_{2k}$, $k\leq n$, of a degeneration of compact hyperk\"ahler manifolds
$\kx\to \Delta$  with unipotent monodromy satisfy
$$\nilp(N_{2k})=k\cdot\nilp(N_2).$$
\end{conj}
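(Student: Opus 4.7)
The problem splits by $\nu \coloneqq \nilp(N_2) \in \{0,1,2\}$. Type I ($\nu=0$) is cited in the introduction as known, and type III ($\nu=2$) follows from the general bound $\nilp(N_{2k}) \le 2k$ \cite[Ch.~IV]{Griffiths} together with a matching lower bound, so the substantive task is type II ($\nu=1$), where $\nilp(N_{2k}) = k$ must be proved.

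For the lower bound, since $N$ acts as a derivation of the cup product and $\nu = 1$ forces $N^2 = 0$ on $H^2$, induction yields $N_{2k}^k(\alpha^k) = k!\,(N\alpha)^k$ for any $\alpha \in H^2$. Picking $\alpha$ with $N\alpha \ne 0$, Verbitsky's theorem asserting the injectivity of $\mathrm{Sym}^k H^2$ into $H^{2k}$ for $k \le n$ guarantees that $(N\alpha)^k$ is nonzero, hence $\nilp(N_{2k}) \ge k$. This direction is formal and uses nothing specific to type II beyond $N_2^2=0$.

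For the upper bound $\nilp(N_{2k}) \le k$ in middle degree $k = n$, I would reformulate the statement as a vanishing for the monodromy weight filtration: writing $W_\bullet$ for that filtration on $H^{2n}_{\lim}$, the bound $\nilp(N_{2n}) \le n$ is equivalent to $\mathrm{Gr}^W_i H^{2n}_{\lim} = 0$ for $i < n$, and by Poincar\'e duality on the limit MHS equivalently to vanishing at weights $> 3n$. The natural tool is the Looijenga--Lunts--Verbitsky Lie algebra $\mathfrak{g}(X)$ acting on $H^*(X,\QQ)$ and containing the action of $\mathfrak{so}(H^2\oplus\QQ^2)$; its semisimplicity decomposes cohomology into irreducibles, and on the Verbitsky (SH-algebra) component of $H^{2n}$, which is a quotient of $\mathrm{Sym}^n H^2$, the action of $N$ as a derivation visibly has index of nilpotence at most $n$ because $N_2^2=0$.

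The main obstacle is controlling the non-Verbitsky LLV-components of $H^{2n}$. In middle degree one may exploit the self-duality under Poincar\'e pairing together with the compatibility of $N$ with the LLV action to pin down the weights of each isotypic piece: a class of too-low weight would, under the LLV raising operators, generate cohomology incompatible with the known Hodge numbers. For $k<n$ this compatibility is less rigid, which plausibly accounts for the "additional assumption" mentioned in the abstract needed to extend the argument to arbitrary degree.
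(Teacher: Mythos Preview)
Your reduction to type II and the lower bound are fine. The genuine gap is your upper bound on the non-Verbitsky LLV components of $H^{2n}$. The sentence ``a class of too-low weight would, under the LLV raising operators, generate cohomology incompatible with the known Hodge numbers'' is not an argument: Poincar\'e self-duality by itself places no constraint on which LLV irreducibles occur or where their weights sit inside $H^{2n}$, and turning this heuristic into a proof is precisely the strategy of \cite{GKLR}, which requires complete knowledge of the LLV decomposition of $H^\ast$ --- available only for the known deformation types. So as written, your middle-degree upper bound is no stronger than the case-by-case verification the paper is trying to supersede.

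The paper's route is entirely different and avoids decomposing $H^{2n}$. It introduces the operator $M=[L_\beta,\Lambda^c_\sigma]$ of Hodge bidegree $(-1,1)$ for an isotropic class $\beta\in H^{1,1}$, and shows (Corollary~\ref{cor:main}) that $N$ and $M$ are conjugate under a graded algebra automorphism of $H^\ast(X,\CC)$: both lie in the degree-zero part $\mathfrak{so}(H^2)\oplus\CC h$ of ${\mathfrak g}(X)$, and via the lift of ${\rm SO}(H^2)$ to $\Aut(H^\ast)$ one can send the isotropic plane ${\rm Im}(N_2)$ to ${\rm Im}(M_2)$, after which the conjugate of $N$ agrees with $M$ on $H^2$ and hence everywhere. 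The vanishing $M^{n+1}=0$ on $H^{2n}$ is then a direct bidegree count (Proposition~\ref{prop:MNagai}): it is automatic on $H^{p,q}$ with $p\le n$, and for $p>n$ one writes $\alpha=\sigma^{p-n}\wedge\gamma$ via the isomorphism $\sigma^{p-n}\colon\Omega^{2n-p}\cong\Omega^p$ and applies Leibniz together with $M^2(\sigma)=0$. Note finally that the paper does \emph{not} prove the conjecture for $k<n$ unconditionally --- only $\nilp(N_{2k})\le n-1$, with the sharp bound requiring hypothesis~(\ref{eqm:Extraassumption}) --- so your impression that $k<n$ needs an extra assumption is correct, though the paper's actual obstacle (injectivity of cup product by $\beta$ and $\bar\sigma$ below middle degree) is not the representation-theoretic rigidity you invoke.
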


Using Verbitsky's result \cite{Bogo,Verb} that cup-product yields  inclusions $S^kH^2(X,\CC)\subset H^{2k}(X,\CC)$ for $k\leq n$, Nagai already observed that 
$$\nilp(N_{2k})\geq k\cdot\nilp(N_2).$$
Thus, for type III degenerations, i.e.\ $\nilp(N_2)=2$, the conjecture follows
from the two inequalities  $2k\geq\nilp(N_{2k})\geq k\cdot\nilp(N_2)$, see also
\cite[Sec.\ 6]{KLSV} or \cite[Sec.\ 5.1]{GKLR}. 

For projective degenerations  of type I, i.e.\  $N_2=0$, 
the conjecture was first established by Koll\'ar, Laza, Sacc\`a, and Voisin \cite{KLSV}.
An independent proof, also valid in the non-projective case, was given by Soldatenkov \cite[Cor.\ 3.6]{SoldMoscow}.
For type II degenerations, i.e.\ $\nilp(N_2)=1$,  it was proved
in \cite[Thm.\ 6.19]{KLSV} that $k\leq \nilp(N_{2k})\leq 2k-2$ for all $2\leq k\leq n-1$, which follows from the observation that the level of the Hodge structure of $H^{2k}(X,\QQ)/S^kH^2(X,\QQ)$ does not exceed $2k-2$.

Thus, in order to establish Nagai's conjecture in full, only the case of type II degenerations 
remains open. In fact, for all known
examples of compact hyperk\"ahler manifolds the conjecture was established by Green, Kim, Laza, and Robles \cite{GKLR}, relying on a complete understanding of the cohomology ring as a representation of the LLV algebra.
Special cases of degenerations of Hilbert schemes and generalized Kummer varieties
have been treated in \cite{Nagai}.
\smallskip

The purpose of this note is to give an elementary proof of the following.

\begin{thm}\label{thm:main}
Let  $\kx\to\Delta$ be a type II degeneration of compact hyperk\"ahler manifolds of dimension $2n$. 
Then $N^{n+1}=0$ on $H^{2n}(X,\CC)$, i.e.\ $${\rm nilp}(N_{2n})=n.$$
Furthermore, ${\rm nilp}(N_{2k})\leq n-1$ for all $k<n$ and, assuming condition \emph{(\ref{eqm:Extraassumption})}, in fact ${\rm nilp}(N_{2k})=k$ for all $k\leq n$.
\end{thm}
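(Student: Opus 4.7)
The plan is to combine the derivation property of the logarithmic monodromy on the cohomology ring with Verbitsky's embedding $S^k H^2 \hookrightarrow H^{2k}$ and Poincar\'e duality on the middle cohomology. Since the monodromy $T$ is an algebra automorphism, $N = \log T$ is a graded derivation of the cup-product ring, so $N$ restricts to the subring $S^* H^2$ as the symmetric power of $N_2$. In type II, picking $v \in H^2$ with $u \coloneqq N_2 v \ne 0$ gives $N^k(v^k) = k!\, u^k \ne 0$ in $S^k H^2 \hookrightarrow H^{2k}$; this recovers the lower bound $\nilp(N_{2k}) \geq k$ and shows that $\nilp(N)$ equals $k$ exactly on the Verbitsky subspace.

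For the top degree $H^{2n}$, the key input is Poincar\'e duality: the cup-product pairing $\int_X \colon H^{2n} \otimes H^{2n} \to \CC$ is non-degenerate and $N$-skew-adjoint; moreover, by the Fujiki relation $\int_X \alpha^{2n} = c\,q(\alpha)^n$, it remains non-degenerate when restricted to $S^n H^2$. One therefore obtains an $N$-equivariant orthogonal decomposition
\[ H^{2n}(X, \CC) = S^n H^2(X, \CC) \oplus W, \]
and the equality $\nilp(N_{2n}) = n$ reduces to the bound $\nilp(N|_W) \leq n$. To control $W$, note that the generator $\sigma^n$ of $H^{2n,0}(X)$ and its conjugate $\bar\sigma^n$ both lie in $S^n H^2$, so $W$ has $h^{2n,0}(W) = h^{0,2n}(W) = 0$. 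Passing to the limit MHS, this forces the Hodge types on each graded piece $\Gr^W_{2n+j}(W)$ to satisfy $p, q \in [j+1, 2n-1]$. The main obstacle of the proof lies precisely here: the naive level bound yields only $\nilp \leq 2n-2$, and to upgrade it to the sharp bound $\nilp(N|_W) \leq n$ one must exploit the $\mathfrak{sl}_2$-triple associated to $N$ in the limit MHS together with the $N$-invariant polarization on $H^{2n}$, in the spirit of Harder's treatment of good degenerations~\cite{Harder}.

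For $H^{2k}$ with $k < n$, the bound $\nilp(N_{2k}) \leq n - 1$ should follow by an $N$-equivariant Lefschetz argument: choose a K\"ahler class $\omega \in \ker N_2$ (which exists, e.g.\ as a polarization in the projective setting), for which $L_\omega^{n-k} \colon H^{2k} \to H^{2n}$ is $N$-equivariant and injective by hard Lefschetz; this yields $\nilp(N_{2k}) \leq \nilp(N_{2n}) = n$, and a primitive-cohomology refinement should improve this to $\leq n - 1$. The conjectured equality $\nilp(N_{2k}) = k$ under the extra assumption should then follow by decomposing $H^*(X, \CC)$ into LLV-isotypic components and applying the middle-degree analysis separately on each, with the extra assumption controlling the contribution of the non-Verbitsky summands.
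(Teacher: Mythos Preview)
Your proposal has a genuine gap at its crux. You correctly reduce the middle-degree statement to bounding $\nilp(N|_W)\leq n$ on the orthogonal complement $W$ of $S^nH^2$ in $H^{2n}$, but then you explicitly concede that the level argument only gives $\nilp(N|_W)\leq 2n-2$ and defer the remaining step to ``the $\mathfrak{sl}_2$-triple \ldots\ in the spirit of Harder's treatment of good degenerations.'' That is not a proof: Harder's argument needs the \emph{good} hypothesis, and the entire point of the present theorem is to remove it. Nothing in your outline explains how to cross the gap from $2n-2$ to $n$ without that hypothesis. The subsequent steps inherit the same looseness: the existence of a K\"ahler class $\omega\in\ker N_2$ is asserted only in the projective case, the ``primitive-cohomology refinement'' from $n$ to $n-1$ is not carried out, and the treatment under condition~(\ref{eqm:Extraassumption}) is a sketch of intent rather than an argument.

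The paper's route is entirely different and sidesteps all of these difficulties. Rather than analysing $N$ on $W$ directly, it introduces the explicit operator $M=[L_\beta,\Lambda^c_\sigma]$ of Hodge bidegree $(-1,1)$ for an isotropic class $\beta\in H^{1,1}(X)$, and proves $M^{n+1}=0$ on $H^{2k}$ (Proposition~\ref{prop:MNagai}) by a short calculation using $\alpha=\sigma^{p-n}\wedge\gamma$ and the pigeonhole on bidegrees. The key lemma (Corollary~\ref{cor:main}) then shows that $N$ and $M$ are conjugate under a graded algebra automorphism of $H^\ast(X,\CC)$: both lie in the degree-zero part ${\mathfrak g}'\oplus\CC h\cong\mathfrak{so}(H^2)\oplus\CC h$ of the LLV algebra, and since ${\rm SO}(H^2,q)$ acts transitively on isotropic planes and lifts to ring automorphisms, one can arrange $N_2'=M_2$, whence $N'=M$. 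Thus $\nilp(N_{2k})=\nilp(M_{2k})$, and no analysis of $W$, no limit MHS, and no good-degeneration hypothesis are needed. The bound $\nilp(N_{2k})\leq n-1$ for $k<n$ and the statement under~(\ref{eqm:Extraassumption}) then follow from the same bidegree calculus (Remark~\ref{rem:improv}(ii) and Corollary~\ref{cor:extra}).
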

\smallskip

Assuming that the degeneration is good, the result for ${\rm nilp}(N_{2n})$ and ${\rm nilp}(N_{2n-2})$ was proved before using different but related arguments by Harder \cite{Harder}. In this sense, our arguments show that the restrictive assumption in \cite{Harder} that the degeneration is good can be dropped.
Good degenerations had earlier been studied by Nagai \cite{Nagai} who observed already that
${\rm nilp}(N_{2k})\not\in\{1,\ldots,k-1\}$. In particular, for good type II degenerations one has
${\rm nilp}(N_{2n-2})=n-1$ which in fact holds for general type II degenerations.
\smallskip

In Section \ref{sec:levelNagai} we establish a link between the perverse and the monodromy
filtration. It leads to the following reformulation of Nagai's conjecture.
\setcounter{section}{3}
\setcounter{thm}{3}
\begin{cor}
The Nagai conjecture (for type II degenerations) holds if and only if the Hodge structure
on the graded pieces  $\Gr^P_{i}H^{2k}(X, \QQ)$ of the perverse filtration has level at most $2k-2|i-k|$. 
\end{cor}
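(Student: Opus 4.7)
The plan is to translate Nagai's conjecture for a given $k$, which asserts $\nilp(N_{2k})=k$, into a Hodge-theoretic condition on the graded pieces $\Gr^P_iH^{2k}(X,\QQ)$ of the perverse filtration, by means of the link between $P$ and the monodromy weight filtration $W$ (centered at $2k$) from Section~\ref{sec:levelNagai}.

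First I would reduce the conjecture to a one-sided bound. For type II the Verbitsky inclusion $S^kH^2(X,\CC)\hookrightarrow H^{2k}(X,\CC)$ unconditionally yields $\nilp(N_{2k})\geq k$ (iterate $N_{2k}$ on $\nu^k$ for any $\nu\in H^2$ with $N_2\nu\neq 0$), so the conjecture is equivalent to $\nilp(N_{2k})\leq k$, i.e.\ to $\Gr^W_{2k+j}H^{2k}(X,\QQ)=0$ for every $|j|>k$.

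Next, I would invoke the $P$-$W$ compatibility from Section~\ref{sec:levelNagai}: it equips each $\Gr^P_iH^{2k}(X,\QQ)$ with a limit mixed Hodge structure whose weight filtration is induced by $W$ and whose nontrivial weights lie symmetrically around $2k$ (reflecting an $\mathfrak{sl}_2$-type action of $N$). Under this correspondence, the asserted bound \emph{level} $\leq 2k-2|i-k|$ on $\Gr^P_iH^{2k}$ translates, by the Hodge-theoretic definition of level together with the $|i-k|$-shift supplied by the $P$-grading, into the piecewise vanishings $\Gr^W_{2k+j}\Gr^P_iH^{2k}=0$ for $|j|>k-|i-k|$.

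By strictness of $P$ with respect to $W$, the global vanishings for $|j|>k$ are equivalent to the conjunction of these piecewise vanishings as $i$ varies. Combined with the Verbitsky lower bound, this yields the equivalence in the corollary. The main obstacle is to pinpoint the level invariant within the $W$-weight structure on each $\Gr^P_i$; this depends on both the symmetry of the weights around $2k$ and the centering by $|i-k|$, which are the two key pieces of information encoded in the $P$-$W$ dictionary of Section~\ref{sec:levelNagai}. Once this dictionary is in hand, the corollary follows by unwinding the definition of level.
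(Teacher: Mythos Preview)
Your first reduction is fine, but the heart of the argument is misconceived. The result of Section~\ref{sec:levelNagai} does \emph{not} put a limit mixed Hodge structure on $\Gr^P_iH^{2k}(X,\QQ)$, and there is no bifiltration $\Gr^W\Gr^P$ in play. What Theorem~\ref{thm:Nagai} actually gives is the explicit formula
\[
\Gr^N_{n+j}H^{2k}(X,\CC)\;\simeq\;\bigoplus_{p+q=2k}\Gr^P_{j+q}H^{p,q}(X),
\]
relating the monodromy-graded pieces (for the filtration centred at $n$, not at $2k$) directly to the \emph{Hodge components} of the perverse-graded pieces. The level of the pure Hodge structure $\Gr^P_iH^{2k}(X,\QQ)$ refers to the spread of its Hodge numbers, not to any induced weight filtration; your step translating the level bound into ``piecewise vanishings $\Gr^W_{2k+j}\Gr^P_iH^{2k}=0$'' conflates these two notions and is not justified by anything in the paper.

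The paper's proof is a direct unwinding of the displayed formula. Nagai's conjecture amounts to $\Gr^N_{n+j}H^{2k}=0$ for $|j|>k$, which via the formula becomes $\Gr^P_iH^{p,q}(X)=0$ whenever $p+q=2k$ and $|i-q|>k$. Combined with the Hodge symmetry $\Gr^P_iH^{p,q}\simeq\overline{\Gr^P_iH^{q,p}}$, this is $\Gr^P_iH^{p,q}=0$ whenever $\max(|i-p|,|i-q|)>k$, i.e.\ whenever $|p-k|>k-|i-k|$; that is precisely the statement that $\Gr^P_iH^{2k}(X,\QQ)$ has level at most $2k-2|i-k|$. No strictness argument or auxiliary mixed Hodge structure is needed.
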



\setcounter{section}{1}
\setcounter{thm}{0}
The techniques that will be used to prove Theorem \ref{thm:main} also yield information 
about the logarithmic monodromy action in odd degree. In Section \ref{sec:Odd} we shall prove the following. 
\setcounter{section}{4}

\begin{thm}
Let  $\kx\to\Delta$ be a type II degeneration of compact hyperk\"ahler manifolds of dimension $2n$.
Then the odd logarithmic monodromy operators $N_{2k-1}$, $k \leq n$, satisfy
$$ \nilp(N_{2k-1})\leq \min\{2k-3, n-1\}.$$ 
Furthermore,  assuming condition \emph{(\ref{eqm:Extraassumption})}, in fact ${\rm nilp}(N_{2k-1})\leq k-1$ for all $k\leq n$.
\end{thm}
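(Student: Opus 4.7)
The plan is to adapt the perverse-filtration and Hodge-level strategy of Theorem \ref{thm:main} (and the corollary of Section \ref{sec:levelNagai}) to odd cohomology, the key additional input being the odd-degree vanishing $h^{2k-1,0}(X)=0$. The main concern is that in even degree the perverse-filtration bookkeeping is anchored by Verbitsky's explicit subspace $S^kH^2\subset H^{2k}$, which has no counterpart in odd degree; the strategy is therefore to rely only on the intrinsic Hodge-level input, which is available in both parities.

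\textbf{Step 1 (the bound $\nilp(N_{2k-1})\leq 2k-3$).} Since $H^*(X,\mathcal{O}_X)$ is generated in degree two by the dual of the holomorphic symplectic form for any compact hyperk\"ahler manifold $X$, one has $h^{p,0}(X)=0$ for every odd $p$. Combined with Hodge symmetry, this forces the Hodge level of $H^{2k-1}(X,\QQ)$ to be at most $2k-3$. I would then invoke the standard fact that in a one-parameter variation of Hodge structure the nilpotency index of the logarithmic monodromy is bounded above by the Hodge level of the generic fibre. The verification is short: on each weight-graded piece $\Gr^W_{2k-1+j}$ of the limit mixed Hodge structure the induced Hodge filtration satisfies $F^p=0$ for $p\geq 2k-1$ (since $\dim F^p$ is controlled by the smooth fibre), and the real-structure symmetry then forces the Hodge numbers on $\Gr^W_{2k-1+j}$ to satisfy $p,q\leq 2k-2$; as $p+q=2k-1+j$, this gives $j\leq 2k-3$.

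\textbf{Step 2 (the bound $\nilp(N_{2k-1})\leq n-1$).} After semistable reduction, let $\pi:\kx\to\Delta$ be a projective family realising the degeneration and let $P_{\bullet}$ denote the perverse filtration on $H^*(X,\QQ)$ coming from the decomposition theorem applied to $R\pi_*\QQ_\kx$. The equivalence between the perverse and monodromy filtrations established in Section \ref{sec:levelNagai} allows $\nilp(N_{2k-1})$ to be read off from the width of the perverse filtration on $H^{2k-1}(X,\QQ)$. Running the perverse-filtration argument that produces $\nilp(N_{2k})\leq n-1$ for $k<n$ in Theorem \ref{thm:main}, with the single input ``Hodge level of $H^{2k-1}$ is at most $2k-3$'' (Step 1) in place of its even-degree analogue, yields $\nilp(N_{2k-1})\leq n-1$. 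The reason the even top case admits the sharper value $\nilp(N_{2n})=n$ is the Verbitsky inclusion $S^nH^2\subset H^{2n}$, which has no analogue in odd degree; this is precisely why the odd bound stays at $n-1$ for all $k$. This step is expected to be the main obstacle, since it requires the odd-degree translation of the perverse-filtration combinatorics to go through in the absence of the Verbitsky anchor.

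\textbf{Step 3 (the refined bound $\nilp(N_{2k-1})\leq k-1$ under (\ref{eqm:Extraassumption})).} Assumption (\ref{eqm:Extraassumption}), which in the proof of Theorem \ref{thm:main} upgrades the a priori bound $\nilp(N_{2k})\leq n-1$ to the sharp equality $\nilp(N_{2k})=k$ by imposing the optimal Hodge-level bound on each perverse graded piece $\Gr^P_iH^{2k}$, plays the same role in odd degree. Combined with the Hodge-level bound of Step 1, it constrains the Hodge level of each $\Gr^P_iH^{2k-1}$ by one less than in the even case, and the perverse/monodromy correspondence then yields $\nilp(N_{2k-1})\leq k-1$.
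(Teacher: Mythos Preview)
Your Step 1 is fine and essentially matches the paper's observation that $\nilp(N_{2k-1})\leq 2k-3$ follows from the Hodge level of $H^{2k-1}(X,\QQ)$.

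Step 2, however, has a genuine gap stemming from a misreading of the paper. The perverse filtration of Section \ref{sec:levelNagai} is \emph{not} the one attached to the degeneration $\pi\colon\kx\to\Delta$ via the decomposition theorem; it is the perverse filtration associated with a Lagrangian fibration $f\colon X\to B$ (equivalently, the weight filtration of $L_\beta$ for an isotropic $(1,1)$-class $\beta$). Theorem \ref{thm:Nagai} compares the monodromy filtration of $N$ with this Lagrangian perverse filtration via the conjugacy $N\sim M=[L_\beta,\Lambda_\sigma^c]$ of Corollary \ref{cor:main}, not with any filtration coming from $R\pi_*\QQ_\kx$. Moreover, the bound $\nilp(N_{2k})\leq n-1$ for $k<n$ in Theorem \ref{thm:main} is \emph{not} obtained by a perverse-filtration argument: it is the direct computation of Remark \ref{rem:improv}(ii) with the operator $M$, using that $M$ has bidegree $(-1,1)$ and the isomorphism $\sigma^{p-n}\colon\Omega_X^{2n-p}\to\Omega_X^p$ to write $\alpha=\sigma^{p-n}\wedge\gamma$. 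So the ``perverse-filtration argument'' you propose to run does not exist in the paper, and your Step 2 as written contains no actual mechanism producing the bound $n-1$.

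The paper's proof of $\nilp(N_{2k-1})\leq n-1$ is the same direct computation: reduce to $M$ via Corollary \ref{cor:main}, and for $\alpha\in H^{p,q}(X)$ with $p+q=2k-1$ show $M^n(\alpha)=0$ case by case. For $p<n$ this is immediate from the bidegree; for $p=n$ one lands in $H^{0,2k-1}(X)=0$; for $p>n$ one writes $\alpha=\sigma^{p-n}\wedge\gamma$ and uses $M^i(\sigma^{p-n})=0$ for $i>p-n$ together with $M^j(\gamma)=0$ for $j\geq 2n-p$ (the latter because $M^{2n-p}(\gamma)\in H^{0,\text{odd}}=0$, which is exactly where the odd-degree vanishing enters). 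For Step 3, the refinement under (\ref{eqm:Extraassumption}) is again not a Hodge-level argument on perverse pieces but the same descending induction as in Corollary \ref{cor:extra}: from $M^{k+1}=0$ on $H^{2k+1}$ one deduces $M^k=0$ on $H^{2k-1}$ by multiplying by a class $w\in H^2$ and using that ${\rm Im}(M_2)=\langle\beta,\bar\sigma\rangle$.
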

There is also a lower bound for ${\rm nilp}(N_{2k-1})$ depending on the level of the Hodge structure,
see Corollary \ref{cor:H3different0}.

\setcounter{section}{1}
\medskip


\section{Even degree}
In the following, $\kx\to \Delta$ will always be a degeneration of compact hyperk\"ahler manifolds with unipotent monodromy and $X=\kx_t$, $t\ne0$, denotes
a fixed smooth fibre.

\subsection{} We begin by recalling some well-known properties of the logarithmic
monodromy operator $N=\bigoplus N_k$.

\begin{lem}\label{lem:Nderivation}
For classes $\alpha\in H^{k}(X,\CC)$ and $\alpha'\in H^\ell(X,\CC)$ one has
\begin{equation}\label{eqn:Nwedge}
N_{k+\ell}(\alpha\wedge\alpha')=N_k(\alpha)\wedge\alpha'+\alpha\wedge N_\ell(\alpha').
\end{equation}
Furthermore, for a degeneration of compact hyperk\"ahler manifolds, the operator $N_2$ is compatible with the Beauville--Bogomolov form: 
\begin{equation}\label{eqn:Nq}
q(N_2(\alpha),\alpha')+q(\alpha,N_2(\alpha'))=0.
\end{equation}
\end{lem}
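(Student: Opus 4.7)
The plan is to deduce both identities from the fact that monodromy $T=\bigoplus T_k$ is induced by a self-diffeomorphism of the smooth fibre, so it preserves both the cup product and integration over $X$.

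\smallskip

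For \eqref{eqn:Nwedge}, I would start from the identity $T_{k+\ell}(\alpha\wedge\alpha')=T_k(\alpha)\wedge T_\ell(\alpha')$ expressing that the cup product map $\mu\colon H^k(X,\CC)\otimes H^\ell(X,\CC)\to H^{k+\ell}(X,\CC)$ intertwines $T_k\otimes T_\ell$ and $T_{k+\ell}$. Since $T_k\otimes\id$ and $\id\otimes T_\ell$ commute, one has $\log(T_k\otimes T_\ell)=N_k\otimes\id+\id\otimes N_\ell$. The logarithm of a unipotent operator is a universal polynomial in the operator, so the intertwining property passes to logarithms, giving
\[
\mu\circ(N_k\otimes\id+\id\otimes N_\ell)=N_{k+\ell}\circ\mu,
\]
which evaluated on $\alpha\otimes\alpha'$ is exactly \eqref{eqn:Nwedge}. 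In short, any unipotent algebra automorphism has a derivation as its logarithm.

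\smallskip

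For \eqref{eqn:Nq}, the idea is to first show $T_2$ preserves $q$ and then differentiate. Since the monodromy diffeomorphism preserves the fundamental class, $T$ is compatible with $\int_X$, so combining with the previous paragraph
\[
\int_X (T_2\alpha)^{2n}=\int_X T_{4n}(\alpha^{2n})=\int_X\alpha^{2n}
\]
for every $\alpha\in H^2(X,\CC)$. The Fujiki relation $\int_X\beta^{2n}=c_X\,q(\beta)^n$ turns this into $q(T_2\alpha)^n=q(\alpha)^n$ on all of $H^2$. On the Zariski open set where $q(\alpha)\neq 0$ the ratio $q(T_2\alpha)/q(\alpha)$ is a continuous function valued in the $n$-th roots of unity, and since $T_2$ is unipotent the homotopy $s\mapsto e^{sN_2}$ to the identity forces this ratio to be $1$. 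Hence $q(T_2\alpha)=q(\alpha)$ everywhere on $H^2$, and polarisation yields $q(T_2\alpha,T_2\alpha')=q(\alpha,\alpha')$.

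\smallskip

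Finally, applying this identity to $e^{sN_2}$ in place of $T_2$, differentiating in $s$ at $s=0$ and using $\tfrac{d}{ds}\big|_{s=0}e^{sN_2}=N_2$ gives \eqref{eqn:Nq}. The only subtle point is the extraction of the correct $n$-th root above; it is resolved cleanly by the unipotence of $T_2$, which lets us deform $T_2$ continuously to the identity through the one-parameter subgroup $e^{sN_2}$ and thereby fix the branch.
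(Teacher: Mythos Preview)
Your proof is correct and follows essentially the same line as the paper's: for \eqref{eqn:Nwedge} the paper expands $T^m=\exp(mN)$ in powers of $m$ and reads off the linear coefficient, which is precisely your ``logarithm of an intertwiner'' argument, and for \eqref{eqn:Nq} the paper simply takes as known that $T_2$ is $q$-orthogonal and repeats the same expansion. The one genuine addition in your version is that you \emph{prove} $q(T_2\alpha,T_2\alpha')=q(\alpha,\alpha')$ via the Fujiki relation rather than citing it; this is fine, but note that your branch-fixing homotopy $s\mapsto e^{sN_2}$ tacitly uses that $e^{sN}$ preserves both cup product and $\int_X$ for every $s$, which follows from the derivation property of $N$ just established together with $N_{4n}=0$.
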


\begin{proof}
The monodromy operator $T$ on $H^\ast(X,\CC)$ is an algebra isomorphism
and, hence,  \begin{equation}\label{eqn:Tauto}
T^m(\alpha\wedge\alpha')=T^m(\alpha)\wedge T^m(\alpha')
\end{equation} for all $m\geq 0$.
Now use  $T^m=\exp(mN)$ and expand (\ref{eqn:Tauto}) in $m$. Comparing linear terms, yields the first assertion. The proof of the second assertion is similar, relying on the fact that the monodromy operator $T_2$ is orthogonal, i.e.\ $q(T_2(\alpha),T_2(\alpha'))=q(\alpha,\alpha')$.
\end{proof}

Let us now consider a degeneration $\kx\to\Delta$ of compact hyperk\"ahler manifolds with  $T_2-{\rm id}$ of order two
or, equivalently, such that $N_2^2=0$ but $N_2\ne0$,  i.e.\ a degeneration of type II.
For the convenience of the reader we recall the following observation due to Schreieder and Soldatenkov \cite[Prop.\ 4.1]{SchrSold}.

\begin{lem}\label{lem:SchrSold}
For a type II degeneration of compact hyperk\"ahler manifolds the image of the logarithmic monodromy operator $N_2\colon H^2(X,\CC)\to H^2(X,\CC)$
is an isotropic plane.

Furthermore, for any basis $w_1,w_2\in {\rm Im}(N_2)$ 
one has 
\begin{equation}\label{eqn:SchrSold}
N_2(\alpha)=q(w_2,\alpha)\,w_1-q(w_1,\alpha)\,w_2
\end{equation}
up to scaling $N_2$ by a factor.
\end{lem}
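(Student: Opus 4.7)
The plan is to prove the lemma in three stages: isotropy of the image, then that it has dimension exactly two, and finally the explicit formula \eqref{eqn:SchrSold}. For \emph{isotropy}, apply \eqref{eqn:Nq} with $\alpha'$ replaced by $N_2\gamma$ and use $N_2^2=0$ to obtain
$$q(N_2\alpha,N_2\gamma) \;=\; -q(\alpha,N_2^2\gamma) \;=\; 0,$$
so $\im(N_2)$ is $q$-isotropic.

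For the \emph{dimension}, I combine an upper bound with a parity constraint. Since $\im(N_2)$ is defined over $\QQ$ and isotropic, while the Beauville--Bogomolov form has signature $(3,b_2-3)$ on $H^2(X,\RR)$, the Witt index of $q$ bounds $\dim_{\CC}\im(N_2)\leq 3$. On the other hand, the monodromy weight filtration on $H^2(X,\CC)$, centred at weight two, reads
$$0\subset W_1=\im(N_2)\subset W_2=\ker(N_2)\subset W_3=H^2(X,\CC),$$
and Schmid's nilpotent orbit theorem endows $\Gr^W_1\cong\im(N_2)$ with a pure $\QQ$-Hodge structure of weight one. Hodge symmetry then forces $\dim\im(N_2)=2h^{1,0}(\Gr^W_1)$ to be even. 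Combined with $N_2\neq 0$, the only option is $\dim\im(N_2)=2$.

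For the \emph{formula}, use the canonical isomorphism $\mathfrak{so}(H^2,q)\cong\wedge^2 H^2(X,\CC)$ identifying a $2$-vector $v\wedge w$ with the endomorphism sending $\alpha$ to $q(v,\alpha)w-q(w,\alpha)v$. Under this identification the operator rank equals twice the bivector rank, so an element of operator rank two must be a simple (decomposable) $2$-vector. Hence $N_2=c\cdot w_1\wedge w_2$ for some scalar $c$ and any basis $w_1,w_2$ of $\im(N_2)$, which unpacks to \eqref{eqn:SchrSold} up to rescaling by $c$.

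The main obstacle I anticipate is the parity argument in the dimension step: though simply stated, it ultimately rests on the rational structure of the limit mixed Hodge structure, a deep input from Schmid's theorem. The remaining steps are direct manipulations in $\mathfrak{so}(q)$.
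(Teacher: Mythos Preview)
Your argument is correct. The isotropy step is identical to the paper's. For the other two steps you take a different, but equally valid, route.

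For the dimension, the paper simply cites a Hodge-number computation in the limit mixed Hodge structure (Griffiths, Ch.~VI) to obtain $\dim\im(N_2)=2$. Your argument is more explicit: you bound $\dim\im(N_2)\leq 3$ via the real Witt index of the signature-$(3,b_2-3)$ form and then invoke the weight-one Hodge structure on $\Gr^W_1=\im(N_2)$ to force even parity. Both approaches ultimately rest on Schmid's theorem, but yours unpacks the mechanism rather than pointing to a reference. (A small remark: your Witt-index bound is really $\min(3,b_2-3)$; for $b_2=4$ this would already force $\dim\im(N_2)\leq 1$, which together with parity would rule out type~II degenerations in that Betti number---consistent with the paper's later discussion that a \emph{rational} isotropic plane, as opposed to a complex one, needs $b_2\geq 5$.)

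For the formula, the paper writes $N_2=\lambda_1(\,\cdot\,)\,w_1+\lambda_2(\,\cdot\,)\,w_2$ and uses the identity $q(N_2\alpha,\alpha)=0$ to pin down $[\lambda_1:\lambda_2]=[q(w_2,\,\cdot\,):-q(w_1,\,\cdot\,)]$ by a direct polynomial comparison. Your argument via $\mathfrak{so}(H^2,q)\cong\wedge^2 H^2$ and the observation that operator rank two forces decomposability of the bivector is more structural and equally short; it has the advantage of making transparent why the coefficients of $w_1,w_2$ must be $q(w_2,\,\cdot\,)$ and $q(w_1,\,\cdot\,)$ rather than arbitrary linear forms.
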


\begin{proof}
Due to (\ref{eqn:Nq}), one has $q(N_2(\alpha),N_2(\alpha'))=-q(\alpha,N_2^2(\alpha'))=0$,
which shows that the image is indeed isotropic. A computation with Hodge numbers, see \cite[Ch.\ VI]{Griffiths}, yields $\dim {\rm Im}(N_2)=2$.

Thus, one can write $N_2=\lambda_1(~)\, w_1+\lambda_2(~)\,w_2$
for certain linear forms  $\lambda_1,\lambda_2$ on $H^2(X,\CC)$.
Then, since by (\ref{eqn:Nq}) one has
$q(N_2(\alpha),\alpha)=0$ for all $\alpha\in H^2(X,\CC)$, one finds
$\lambda_1(\alpha)\,q(w_1,\alpha)+\lambda_2(\alpha)\,q(w_2,\alpha)=0$,
which proves $[\lambda_1(~):\lambda_2(~)]=[q(w_2,~):-q(w_1,~)]$.
\end{proof}

\subsection{}\label{sec:betaM} Let us now fix a non-trivial isotropic class
$\beta\in H^{1,1}(X)$ and denote by $L_\beta$ the operator
 $\alpha\mapsto\alpha\wedge\beta$ which is of bidegree $(1,1)$.
 Furthermore, we let  $\Lambda^c_\sigma$ be the operator of
 bidegree $(-2,0)$ defined as the contraction  by the holomorphic
 symplectic form $\sigma$, see also Remark \ref{rem:confusingLambda}.
Then the operator
$$M\coloneqq [L_\beta,\Lambda^c_\sigma]$$
is of bidegree $(-1,1)$ and
enjoys similar properties as the monodromy operator $N$.
 For example, 
 \begin{equation}\label{eqn:Mwedge}M_{k+\ell}(\alpha\wedge \alpha')=M_k(\alpha)\wedge\alpha'+\alpha\wedge M_\ell(\alpha')
 \end{equation} for classes $\alpha\in H^k(X,\CC)$ and $\alpha'\in H^\ell(X,\CC)$.
 Furthermore, $M$ is of type II, i.e.\ $M_2^2=0$, and the image of $M_2$ is an isotropic plane,
 namely ${\rm Im}(M_2)=\langle \beta,\bar\sigma\rangle$. More precisely, up to a scaling factor
 $$M(\sigma)=-q(\bar\sigma,\sigma)\,\beta\text{ and }M(\alpha)=q(\beta,\alpha)\,\bar\sigma$$
 for any $\alpha\in H^{1,1}(X)$. So, similarly to Lemma
 \ref{lem:SchrSold}, we can write,  up to a scaling factor,
 \begin{eqnarray}\label{eqn:Mq}
 M_2=q(\beta,~)\,\bar\sigma-q(\bar\sigma,~)\,\beta.
 \end{eqnarray} 
 The quickest way to prove these facts is via the (complex) Looijenga--Lunts--Verbitsky
 Lie algebra ${\mathfrak g}(X)$, see \cite{LL,Verb} or the summaries  \cite[Thm.\ 2.7]{GKLR} and \cite[Sec.\ 2]{SoldMRL}. It is generated by all $L_x$ and $\Lambda_x$,
 where $x\in H^2(X,\CC)$ can be any class satisfying the Lefschetz property. As $L_\beta$ and $\Lambda_\sigma^c$ are linear combinations of certain $L_x$ and $\Lambda_x$, cf.\ Remark 2.3 below,  
 the known commutator
 relations in ${\mathfrak g}(X)$ also hold for  $[L_\beta,\Lambda^c_\sigma]$
 and the property that ${\mathfrak g}(X)$ acts by derivation on the algebra
 $H^\ast(X,\CC)$ extends to the operator $M$.
 
 \begin{remark}\label{rem:confusingLambda}
(i) If one writes $\sigma=\omega_J+\sqrt{-1}\omega_K$, where $\omega_J,\omega_K$
are the K\"ahler classes of complex structures $J$ and $K$ defined
by the choice of a hyperk\"ahler metric, then $\Lambda^c_\sigma=\Lambda_{\omega_J}-\sqrt{-1}\Lambda_{\omega_K}$. Here, $\Lambda_{\omega_J}$ and $\Lambda_{\omega_K}$ are the usual dual Lefschetz
operators associated to the Lefschetz classes $\omega_J$ and $\omega_K$. In other words, if $\Lambda_\omega$ depending on a Lefschetz class $\omega$ is extended linearly on the whole $H^2(X,\CC)$, then $\Lambda_\sigma^c=\Lambda_{\bar\sigma}$.
For an explanation of the sign we refer to \cite[p.\ 118]{Fujiki}, see also \cite[Sec.\ 3.1]{ShenYin}.
\smallskip

(ii) There exists an isomorphism ${\mathfrak g}(X)\cong \mathfrak{so}(\tilde H(X,\CC))$, see \cite{LL,Verb}. Here, $\tilde H(X,\CC)$ denotes the Mukai extension of $H^2(X,\CC)$, i.e.\ its direct sum with a hyperbolic plane. Furthermore, ${\mathfrak g}(X)\cong{\mathfrak g}_{-2}\oplus\left({\mathfrak g}'\oplus\CC\, h\right)\oplus{\mathfrak g}_2$ where ${\mathfrak g}_{\pm2}\cong H^2(X,\CC)$,
${\mathfrak g}'\cong\mathfrak{so}(H^2(X,\CC))$, and $h$ is the standard counting operator, i.e.\ $h(v)=(d-2n)v$ for $v \in H^{d}(X, \CC)$, 
cf.\ \cite{LL,GKLR,SoldMRL,Verb}.
\end{remark}
 
\subsection{} Nagai's conjecture for the operator $M_{2n}$ can be verified by a computation. This
is the content of the next result. For $k=n$ the assertion will give Nagai's conjecture for the
middle cohomology, see Section \ref{sec:Proofmain}. For $k\leq n/2$ one in fact has the stronger result $M^{2k+1}=0$ on $H^{2k}(X,\CC)$, simply because $M$ is of bidegree $(-1,1)$.

\begin{prop}\label{prop:MNagai}
For all $k\leq n$ we have $M^{n+1}=0$ on $H^{2k}(X,\CC)$. 
\end{prop}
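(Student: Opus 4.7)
The plan is to work bidegree by bidegree, using the non-degeneracy of the holomorphic symplectic form $\sigma$ to trade ``above-diagonal'' bidegrees for ``below-diagonal'' ones, where the Leibniz rule \eqref{eqn:Mwedge} and the bidegree of $M$ combine to close up the computation.

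Since $M$ has bidegree $(-1,1)$, a class $\alpha\in H^{p,q}(X)$ with $p+q=2k$ satisfies $M^{n+1}(\alpha)\in H^{p-n-1,\,q+n+1}$, which vanishes automatically unless $p\geq n+1$; since $p+q=2k\leq 2n$ this forces $q\leq n-1$ as well. So I only need to treat $p\geq n+1$. For such $p$, the non-degeneracy of $\sigma$ gives a sheaf isomorphism $\sigma^{p-n}\wedge(\cdot)\colon \Omega^{2n-p}_X\xrightarrow{\sim}\Omega^{p}_X$, inducing an isomorphism $L_\sigma^{p-n}\colon H^{2n-p,q}\xrightarrow{\sim} H^{p,q}$. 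I therefore write $\alpha=\sigma^{p-n}\wedge\alpha'$ with a unique $\alpha'\in H^{2n-p,q}$; the gain is that $\alpha'$ now sits strictly below the diagonal, so bidegree forces $M^j(\alpha')\in H^{2n-p-j,\,q+j}$ to vanish as soon as $j>2n-p$.

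Next, formula \eqref{eqn:Mq} together with the Hodge-type orthogonalities $q(\beta,\sigma)=0=q(\beta,\bar\sigma)$ yield $M(\sigma)\in\CC\beta$ and $M(\beta)=0$, hence $M^2(\sigma)=0$. A short Leibniz induction then shows $M^i(\sigma^{p-n})$ is a scalar multiple of $\sigma^{p-n-i}\beta^i$ for $0\leq i\leq p-n$ and $M^i(\sigma^{p-n})=0$ for $i>p-n$. Expanding via \eqref{eqn:Mwedge},
\[
M^{n+1}(\sigma^{p-n}\wedge\alpha')=\sum_{i+j=n+1}\binom{n+1}{i}\,M^i(\sigma^{p-n})\wedge M^j(\alpha'),
\]
any potentially non-vanishing summand must satisfy both $i\leq p-n$ and $j\leq 2n-p$, whence $i+j\leq (p-n)+(2n-p)=n$, contradicting $i+j=n+1$. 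The sum is therefore empty and $M^{n+1}(\alpha)=0$.

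There is no genuine obstacle beyond this bidegree bookkeeping; the real content is the observation that the slack gained from $L_\sigma^{p-n}$ precisely balances the Leibniz count, so that the two bounds on $i$ and $j$ add up to exactly $n$ rather than $n+1$. The case $k\leq n/2$ noted in the paper before the proposition is already subsumed, corresponding to the purely bidegree-driven vanishing when $p\leq n$ throughout $H^{2k}$.
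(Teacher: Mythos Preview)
Your proof is correct and follows essentially the same argument as the paper: reduce by bidegree to $p>n$, write $\alpha=\sigma^{p-n}\wedge\gamma$ using the isomorphism $\Omega_X^{2n-p}\cong\Omega_X^p$, then expand $M^{n+1}$ by the Leibniz rule and use $M^i(\sigma^{p-n})=0$ for $i>p-n$ together with $M^j(\gamma)=0$ for $j>2n-p$ to see that every summand vanishes. Your extra details (the explicit form $M^i(\sigma^{p-n})\in\CC\,\sigma^{p-n-i}\beta^i$ and the derivation of $M^2(\sigma)=0$ from \eqref{eqn:Mq}) are correct embellishments of steps the paper leaves implicit.
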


\begin{proof} Since $M$ is of bidegree $(-1,1)$, we have
$M^{\ell+1}=0$ on $H^{p,q}(X)$ for  $p\leq \ell$. Thus, to prove the assertion, it suffices to
show $M^{n+1}(\alpha)=0$ for $\alpha\in H^{p,q}(X)$ with $p+q=2k$ and $p>n$.
 Using the isomorphism $\sigma^{p-n}\colon \Omega_X^{2n-p}\congpf \Omega_X^p$ for $n<p\leq 2n$, we can write $\alpha=\sigma^{p-n}\wedge \gamma$ for some $\gamma\in H^{2n-p,q}$.
Then, by (\ref{eqn:Mwedge}) 
\begin{eqnarray*}
M^{n+1}(\alpha)=M^{n+1}(\sigma^{p-n}\wedge\gamma)&=&\sum_{i+j=n+1}{n+1\choose i}\,M^i(\sigma^{p-n})\wedge M^j(\gamma)\\
&=&
\sum_{\substack{i+j=n+1\\ j\leq 2n-p}}{n+1\choose i}\,M^i(\sigma^{p-n})\wedge M^j(\gamma),
\end{eqnarray*}
where we use that for bidegree reasons $H^{2n-p,q}(X)$ is annihilated by $M^{j}$ for $j>2n-p$.
On the other hand, since $M^2(\sigma)=0$, we have $M^i(\sigma^{p-n})=0$ for $i>p-n$.
This yields the assertion.
\end{proof}

\subsection{} The arguments in the previous proof can be used to deduce $M^{k+1}=0$
on $H^{2k}(X,\CC)$ assuming the following holds for all $p+q\leq 2n-2$:
\begin{equation}\label{eqm:Extraassumption}
\{ \gamma\in H^{p,q}(X)\mid\gamma\wedge\beta=0,\ \gamma\wedge\bar\sigma=0\}=0.
\end{equation}
In fact, it suffices to show triviality for those $\gamma\in H^{2k}(X,\CC)$, $2k<2n$, that are furthermore contained in the image of $M^{k+1}$ or satisfy $\Lambda_{\bar\sigma}(\gamma)\wedge\beta=0$.

\begin{cor}\label{cor:extra}
Assuming \emph{(\ref{eqm:Extraassumption})}, one has
$M^{k+1}=0$ on $H^{2k}(X,\CC)$ for all $2k\leq 2n$.
\end{cor}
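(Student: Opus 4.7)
The plan is to adapt the proof of Proposition \ref{prop:MNagai} by forward induction on $k \le n$, invoking assumption \eqref{eqm:Extraassumption} exactly in the range where the $\sigma$-factorization of Proposition \ref{prop:MNagai} is unavailable. The base cases $k \le 1$ follow from the $(-1,1)$-bidegree of $M$ together with Lemma \ref{lem:SchrSold} (which gives $M_2^2 = 0$). For the inductive step, fix $\alpha \in H^{p,q}(X)$ with $p + q = 2k$; the bidegree of $M$ forces $M^{k+1}(\alpha) = 0$ whenever $p \le k$, so I may henceforth assume $p > k$.

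For $p > n$, I would repeat the argument of Proposition \ref{prop:MNagai} almost verbatim, strengthened by the inductive hypothesis. Writing $\alpha = \sigma^{p-n} \wedge \gamma$ with $\gamma \in H^{2n-p, q}$, the class $\gamma$ has total degree $2(n+k-p) < 2k$, so the inductive hypothesis gives $M^{n+k-p+1}(\gamma) = 0$. Combined with $M^i(\sigma^{p-n}) = 0$ for $i > p - n$ (because $M^2(\sigma) = 0$), the Leibniz expansion \eqref{eqn:Mwedge} of $M^{k+1}(\alpha)$ collapses: every admissible term satisfies $i + j \le (p - n) + (n + k - p) = k < k+1$, forcing $M^{k+1}(\alpha) = 0$.

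For $k < p \le n$, one has $k \le n - 1$, and $\eta \coloneqq M^{k+1}(\alpha) \in H^{2k}$ lies in the degree range of \eqref{eqm:Extraassumption}. The key identities $M(\beta) = M(\bar\sigma) = 0$ follow from the explicit form \eqref{eqn:Mq} together with $q(\beta, \beta) = q(\bar\sigma, \bar\sigma) = q(\beta, \bar\sigma) = 0$ (the last because $H^{1,1}$ is $q$-orthogonal to $H^{0,2}$). Hence \eqref{eqn:Mwedge} yields
\[
\eta \wedge \beta = M^{k+1}(\alpha \wedge \beta), \qquad \eta \wedge \bar\sigma = M^{k+1}(\alpha \wedge \bar\sigma),
\]
and by \eqref{eqm:Extraassumption} it suffices to prove both right-hand sides vanish. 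I would then recursively apply the same dichotomy to $\alpha \wedge \beta \in H^{p+1, q+1}$ and $\alpha \wedge \bar\sigma \in H^{p, q+2}$, both in $H^{2k+2}$: since $M$ commutes with $L_\beta$ and $L_{\bar\sigma}$, the iterates take the shape $\eta \wedge \beta^a \wedge \bar\sigma^b = M^{k+1}(\alpha \wedge \beta^a \wedge \bar\sigma^b)$, and the recursion terminates once either the holomorphic degree surpasses $n$ (triggering the case $p > n$) or the target cohomology vanishes for bidegree reasons.

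The main obstacle is to keep the recursion coherent at the boundary $2k = 2n - 2$, where intermediate total degrees threaten to escape the regime of \eqref{eqm:Extraassumption}. This is precisely what the author's parenthetical remark addresses: using $[M, \Lambda_{\bar\sigma}] = 0$ (a consequence of the LLV algebra structure together with $M(\bar\sigma) = 0$), the inductive hypothesis applied to $\Lambda_{\bar\sigma}(\alpha) \in H^{2k-2}$ yields $\Lambda_{\bar\sigma}(\eta) = M^{k+1}(\Lambda_{\bar\sigma}(\alpha)) = 0$, so $\eta$ automatically satisfies $\Lambda_{\bar\sigma}(\eta) \wedge \beta = 0$. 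Only the weaker form of \eqref{eqm:Extraassumption} — applied to classes in $\mathrm{Im}(M^{k+1})$ or satisfying $\Lambda_{\bar\sigma}(\gamma) \wedge \beta = 0$ — is ultimately needed, which is what allows the recursion to close.
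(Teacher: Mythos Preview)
Your forward induction with an inner recursion is genuinely different from the paper's argument, and as written it does not close. The paper proceeds by \emph{backward} induction: assuming $M^{k+2}=0$ on $H^{2k+2}(X,\CC)$ (anchored at $k=n$ by Proposition~\ref{prop:MNagai}), it takes $\alpha\in H^{2k}$ with $M^\ell(\alpha)\ne0$ for some maximal $\ell\ge k+1$, wedges with an \emph{arbitrary} $w\in H^2$, and uses Leibniz plus $M^2(w)=0$ and $M^{\ell+1}(\alpha)=0$ to get
\[
0=M^{\ell+1}(\alpha\wedge w)=(\ell+1)\,M^\ell(\alpha)\wedge M(w).
\]
Since $M(w)$ ranges over $\langle\beta,\bar\sigma\rangle$, assumption~\eqref{eqm:Extraassumption} applied once gives $M^\ell(\alpha)=0$, a contradiction. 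No recursion is needed.

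The gap in your argument is the termination of the inner recursion. You reduce $M^{k+1}(\alpha)=0$ to $M^{k+1}(\alpha\wedge\beta)=0$ and $M^{k+1}(\alpha\wedge\bar\sigma)=0$, both in degree $2k+2$, and propose to iterate. But after each step the total degree grows while the exponent stays at $k+1$; once you reach degree $2n$, condition~\eqref{eqm:Extraassumption} no longer applies, and neither of your two terminating mechanisms works in general. For the ``$p>n$'' exit, write $\alpha'=\alpha\wedge\beta^a\wedge\bar\sigma^b=\sigma^{p'-n}\wedge\gamma$; the Leibniz expansion of $M^{k+1}(\alpha')$ has a surviving term with $j=n+k+1-p'$, and your forward hypothesis on $\gamma$ (degree $2(k'-p'+n)$ with $k'=k+a+b>k$) only kills $M^j(\gamma)$ for $j\ge k'-p'+n+1>n+k+1-p'$, so this term is not controlled. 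The bidegree-vanishing exit likewise fails: for instance with $k=n-1$, $p=n$, one is led to show $M^n(\alpha\wedge\bar\sigma)=0$ for $\alpha\wedge\bar\sigma\in H^{n,n}$, which lands in $H^{0,2n}\ne0$ and sits outside both the inductive range and the $\sigma$-factorisation regime. The observation that $\Lambda_{\bar\sigma}(\eta)=0$ does not rescue this, since you still need $\eta\wedge\beta=\eta\wedge\bar\sigma=0$ as inputs to~\eqref{eqm:Extraassumption}. The fix is precisely the paper's move: reverse the induction and wedge with preimages $w$ of $\beta,\bar\sigma$ under $M_2$ rather than with $\beta,\bar\sigma$ themselves, so that the inductive hypothesis in degree $2k+2$ applies directly.
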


\begin{proof}
It suffices to  prove that $M^{k+1}=0$ on $H^{2k}(X,\CC)$, $k<n$, is implied by
$M^{k+2}=0$ on $H^{2k+2}(X,\CC)$, which reduces the assertion to the middle degree
$2n$ covered by Proposition \ref{prop:MNagai}.
 
 Let $\alpha\in H^{p,q}(X)$ with $p+q=2k<2n$ (and $q<k$) and assume that $\ell$ is maximal
with $M^\ell(\alpha)\ne0$. Suppose $\ell\geq k+1$. 
 Since $\ell+1\geq k+2$, $\alpha\wedge w\in H^{2k+2}(X,\CC)$,  and $M^2(w)=0$ for all classes $w\in H^2(X,\CC)$, we have
 $$0=M^{\ell+1}(\alpha\wedge w)=(l+1)M^\ell(\alpha)\wedge M(w).$$
 As the image of $M$ acting on $H^2(X,\CC)$ is spanned by $\bar\sigma$ and $\beta$, assumption (\ref{eqm:Extraassumption}) yields the contradiction $M^\ell(\alpha)=0$.
\end{proof}

\begin{remark}
The condition (\ref{eqm:Extraassumption}) is trivially satisfied for $q<n$, for 
$L_{\bar\sigma}$ is injective on $H^{p,q}(X)$ for $q<n$. Also, if $\beta$ were a K\"ahler class
or simply a class with $q(\beta)\ne0$, then $L_\beta$ would be injective for $2k<2n$.
At this point, we do not know how realistic (\ref{eqm:Extraassumption}) is. Note that, unlike condition (5.2) in
\cite[Thm.\ 5.2]{GKLR},
which is shown to be equivalent to Nagai's conjecture,  condition  (\ref{eqm:Extraassumption})
may fail without contradicting Nagai's conjecture.
\end{remark}

\begin{remark}\label{rem:improv}
(i) Note that the argument in the above proof reducing $M^{k+1}=0$ on $H^{2k}(X,\CC)$ to $M^{k+2}=0$ on $H^{2k+2}(X,\CC)$ works for $2k<n$, for in this case $L_{\bar\sigma}$ is injective on $H^{2k}(X,\CC)$. In other words, without assuming (\ref{eqm:Extraassumption}),
to confirm Nagai's conjecture (for the operator $M$) it suffices to show $M^{k+1}=0$ on $H^{2k}(X,\CC)$ for $n\leq 2k<2n$.

(ii) Also, one can combine the arguments in the proofs of Proposition \ref{prop:MNagai} and Corollary \ref{cor:extra} to show that $M^n=0$ on $H^{2k}(X,\CC)$ for $2k<2n$, because in this case one only needs  
(\ref{eqm:Extraassumption}) for $(0,2k)$-classes which holds true. Indeed, $M^n(\alpha)=0$
for $\alpha\in H^{p,q}(X)$ with $p<n$. For $p\geq n$, write again $\alpha=\sigma^{p-n}\wedge\gamma$
and compute $M^n(\alpha)=\sum_{i+j=n}{n\choose i}\,M^i(\sigma^{p-n})\wedge M^j(\gamma)={n\choose p-n}\,M^{p-n}(\sigma^{p-n})\wedge M^{2n-p}(\gamma)=c\cdot
\beta^{p-n}\wedge M^{2n-p}(\gamma)$ for some constant $c$ with 
$M^{2n-p}(\gamma)\in H^{0,2(k-p+n)}(X)$ and so $M^{2n-p}(\gamma)=c'\cdot \bar\sigma^{k-p+n}$.

Hence, for $w\in H^2(X,\CC)$ with $M(w)=\bar\sigma$ we obtain $0=M^{n+1}(w\wedge\alpha)=c\cdot c'\cdot(n+1)\cdot\beta^{p-n}\wedge\bar\sigma^{k-p+n+1}$.
Since $\bar\sigma^{2n-p}$ induces an isomorphism $H^{p-n,p-n}(X)\simeq H^{p-n,3n-p}(X)$ and since
$k-p+n+1\leq 2n-p$, multiplying classes in $H^{p-n,p-n}(X)$ with $\bar\sigma^{k-p+n+1}$  is injective and, in particular, $\beta^{p-n}\wedge\bar\sigma^{k-p+n+1}\ne0$. This proves $c\cdot c'=0$ and, therefore, $M^n(\alpha)=0$.
\end{remark}

\subsection{}
Let $X$ be a compact hyperk\"ahler manifold and $ H^*(X,\CC)$ its
complex cohomology algebra. In addition, $H^2(X,\CC)$ is endowed with the complex
linear extension of the Beauville--Bogomolov pairing $q$.

Assume $V\subset H^2(X,\CC)$ is an isotropic plane with a basis $v_1,v_2\in V$.
Since $v^{n+1}=0$ in $H^{2n+2}(X,\CC)$ for any isotropic class $v$, the choice of $V=\langle v_1,v_2\rangle$ endows
$H^\ast(X,\CC)$ with the structure of a graded algebra over the Artin algebra
$\CC[x_1,x_2]/(x_1,x_2)^{n+1}$.

The next proposition is a two-parameter version of the observation  \cite[Prop.\ 2.2]{HM}.

\begin{prop}\label{prop:C[x]/x^n+1} 
Any element in the special orthogonal group ${\rm SO}(H^2(X,\CC))$ lifts to
an automorphism of the cohomology ring $H^\ast(X,\CC)$ provided that $b_2\geq 4$. \end{prop}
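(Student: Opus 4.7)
The approach I would take is to exponentiate the Lie algebra action of $\mathfrak{so}(H^2(X,\CC))$ on $H^*(X,\CC)$ provided by the Looijenga--Lunts--Verbitsky construction recalled in Remark~\ref{rem:confusingLambda}(ii). By that remark, the LLV algebra decomposes as $\mathfrak g(X) \cong \mathfrak g_{-2} \oplus (\mathfrak g' \oplus \CC\cdot h) \oplus \mathfrak g_2$ with $\mathfrak g' \cong \mathfrak{so}(H^2(X,\CC))$, and the whole of $\mathfrak g(X)$ acts on $H^*(X,\CC)$ by derivations of the cup product. Consequently $\mathfrak g'$ acts on $H^*(X,\CC)$ by degree-preserving derivations, and its restriction to $H^2(X,\CC)$ is, by construction, the standard vector representation of $\mathfrak{so}(H^2(X,\CC))$.

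The next step is to integrate this Lie algebra action to a Lie group homomorphism
\[
\rho\colon \mathrm{Spin}(H^2(X,\CC)) \longrightarrow \mathrm{Aut}(H^*(X,\CC)).
\]
Since $\mathfrak g'$ is spanned by commutators $[L_x, \Lambda_y]$ for Lefschetz-type classes $x, y \in H^2(X,\CC)$, and the corresponding operators are locally finite on the finite-dimensional space $H^*(X,\CC)$, each one-parameter subgroup in $\mathrm{Spin}(H^2(X,\CC))$ acts by algebra automorphisms via exponentiation; simple-connectedness of $\mathrm{Spin}$ then allows these to be assembled into a global homomorphism. Given any $g \in \mathrm{SO}(H^2(X,\CC))$, choosing a preimage $\tilde g$ of $g$ under the double cover $\mathrm{Spin}(H^2(X,\CC)) \twoheadrightarrow \mathrm{SO}(H^2(X,\CC))$ produces the desired lift $\rho(\tilde g) \in \mathrm{Aut}(H^*(X,\CC))$, whose restriction to $H^2(X,\CC)$ is exactly $g$ by the identification of representations above.

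The hypothesis $b_2 \geq 4$ enters at two points. First, it guarantees that $\mathrm{SO}(H^2(X,\CC))$ is connected of rank at least $2$, which is both needed for the two-parameter isotropic data $V = \langle v_1, v_2 \rangle$ introduced before the proposition to exist and for $\mathrm{Spin}(H^2(X,\CC))$ to be its simply connected double cover. Second, it ensures that Lefschetz-type classes span $H^2(X,\CC)$, which is required to identify $\mathfrak g'$ with the full $\mathfrak{so}(H^2(X,\CC))$ inside $\mathfrak g(X)$ and thereby to realise every infinitesimal rotation as a derivation of $H^*(X,\CC)$. The main obstacle I anticipate is the integrability step: one must verify that the exponentials of the chosen generating derivations satisfy the defining relations of $\mathrm{Spin}(H^2(X,\CC))$, so that $\rho$ is a genuine group homomorphism rather than merely a collection of pointwise lifts. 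This is essentially the content of the LLV/Verbitsky theorem combined with the finite-dimensionality of $H^*(X,\CC)$, but it is the point that must be spelled out carefully; the two-parameter flavour alluded to in the reference to \cite[Prop.\ 2.2]{HM} then manifests itself in the observation that $\rho(\tilde g)$ preserves the $\CC[x_1,x_2]/(x_1,x_2)^{n+1}$-module structure on $H^*(X,\CC)$ exactly when $g$ stabilises the isotropic plane $V$.
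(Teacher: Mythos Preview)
Your argument is correct but takes a genuinely different route from the paper's. The paper proceeds via Zariski density: the monodromy operators furnish a subgroup of $\mathrm{Aut}(H^*)$ whose image in $\mathrm{GL}(H^2)$ contains a finite-index subgroup of $\mathrm{SO}(H^2(X,\ZZ))$; since $b_2\geq4$ forces $q$ to be indefinite, Borel's density theorem shows this arithmetic subgroup is Zariski dense in $\mathrm{SO}(H^2,q)$, whence the image of the algebraic group $\mathrm{Aut}(H^*)$ in $\mathrm{GL}(H^2)$ contains all of $\mathrm{SO}(H^2,q)$. Your approach instead integrates the degree-zero LLV action directly, which is also valid and arguably more intrinsic: it avoids Borel density entirely and in fact yields the statement already for $b_2\geq3$.

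Two small corrections to your write-up. First, the integrability worry is unnecessary: any finite-dimensional Lie algebra representation integrates uniquely to the simply connected group, and the exponential of a derivation of a finite-dimensional algebra is an algebra automorphism, so $\rho$ lands in $\mathrm{Aut}(H^*)$ automatically. Second, your account of where $b_2\geq4$ enters is off. Connectedness of $\mathrm{SO}(H^2,\CC)$ holds for all $b_2$, simple-connectedness of $\mathrm{Spin}$ needs only $b_2\geq3$, and the identification $\mathfrak g'\cong\mathfrak{so}(H^2(X,\CC))$ is part of the LLV package without any extra hypothesis on $b_2$. In the paper's proof the bound $b_2\geq4$ is genuinely required, precisely to invoke Borel density; in your proof it is not.
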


\begin{proof} We use the shorthand $H^\ast\coloneqq H^\ast(X,\CC)$ and denote by
${\rm Aut}(H^\ast)$  the complex algebraic group of
automorphisms of the graded $\CC$-algebra $H^\ast$. Then, we have to show that the image $G$ of 
the restriction map ${\rm Aut}(H^\ast)\to {\rm GL}(H^2)$ contains ${\rm SO}(H^2,q)$.
For this we use that monodromy defines a discrete subgroup in ${\rm Aut}(H^\ast)$ whose  image 
in ${\rm GL}(H^2)$ contains a finite index subgroup of the integral special orthogonal
group ${\rm SO}(H^2(X,\ZZ))$. 
Since by \cite{Borel} the latter is Zariski dense in ${\rm SO}(H^2,q)$ when $q$ is indefinite, which holds by our
assumption $b_2\geq 4$, we indeed have ${\rm SO}(H^2,q)\subset G$.
\end{proof}

\begin{cor}\label{cor:C[x]/x^n+1}
Assume $b_2\geq 5$, and let $V=\langle v_1,v_2\rangle$ and $V'=\langle v_1',v_2'\rangle$ be two isotropic planes
in $H^2(X,\CC)$. Then the induced graded $\CC[x_1,x_2]/(x_1,x_2)^{n+1}$-algebra structures
on $H^\ast(X,\CC)$ are isomorphic.
\end{cor}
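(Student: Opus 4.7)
The plan is to reduce the statement to the existence of an element $g\in\mathrm{SO}(H^2(X,\CC),q)$ with $g(v_i)=v_i'$ for $i=1,2$, and then apply Proposition \ref{prop:C[x]/x^n+1} to lift $g$ to a graded $\CC$-algebra automorphism $\tilde g$ of $H^\ast(X,\CC)$. Once such a $\tilde g$ is produced, compatibility with the $\CC[x_1,x_2]/(x_1,x_2)^{n+1}$-action is automatic: since $\tilde g$ is a ring homomorphism,
\[
\tilde g(v_i\wedge\alpha)=\tilde g(v_i)\wedge\tilde g(\alpha)=v_i'\wedge\tilde g(\alpha)
\]
for every $\alpha\in H^\ast(X,\CC)$, so $\tilde g$ intertwines the two algebra structures attached to $V$ and $V'$.

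First, I would construct an isometry $g\in\mathrm{O}(H^2(X,\CC),q)$ with $g(v_i)=v_i'$. Since both $V$ and $V'$ are isotropic, the linear map $V\to V'$ sending $v_i\mapsto v_i'$ is trivially an isometry of the (zero) restricted forms. By Witt's extension theorem, applied to the non-degenerate quadratic form $q$ on $H^2(X,\CC)$, this map extends to a global isometry $g$.

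Next I would correct the determinant, which is where the assumption $b_2\geq 5$ enters. If $\det(g)=1$ we are done. Otherwise, I would post-compose $g$ with a reflection fixing $V'$ pointwise, so that $v_i'$ is preserved. Since $V'$ is isotropic, $V'\subset (V')^\perp$, and the induced form on $(V')^\perp/V'$ is non-degenerate of dimension $b_2-4\geq 1$. Hence there exists a non-isotropic vector in $(V')^\perp\setminus V'$; the associated reflection has determinant $-1$, fixes $V'$ pointwise, and produces the desired element of $\mathrm{SO}(H^2(X,\CC),q)$.

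Finally, I would apply Proposition \ref{prop:C[x]/x^n+1} (which requires only $b_2\geq 4$) to lift $g$ to an automorphism $\tilde g$ of the graded algebra $H^\ast(X,\CC)$. Combined with the intertwining computation above, $\tilde g$ gives the required isomorphism of $\CC[x_1,x_2]/(x_1,x_2)^{n+1}$-algebra structures. The main technical point is the determinant correction: the bound $b_2\geq 5$ is exactly what guarantees enough room in $(V')^\perp/V'$ to produce an orientation-reversing isometry fixing $V'$; without this the orbit of $(v_1',v_2')$ under $\mathrm{SO}$ could be smaller than under $\mathrm{O}$, and Proposition \ref{prop:C[x]/x^n+1} would not directly apply.
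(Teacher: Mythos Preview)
Your proof is correct and follows essentially the same approach as the paper: produce a special orthogonal transformation carrying one isotropic plane (with its chosen basis) to the other, then lift via Proposition~\ref{prop:C[x]/x^n+1}. The paper simply asserts that for $b_2\geq 5$ any two isotropic planes lie in the same ${\rm SO}(H^2,q)$-orbit, whereas you spell out the standard justification (Witt extension plus a reflection in $(V')^\perp$ to correct the determinant) and make explicit why the lifted automorphism intertwines the two $\CC[x_1,x_2]/(x_1,x_2)^{n+1}$-structures.
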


\begin{proof}
If $b_2\geq 5$, any two isotropic planes $V=\langle v_1,v_2\rangle$ and $V'=\langle v_1',v_2'\rangle$ are contained in the same orbit of the action of the complex special orthogonal group ${\rm SO}(H^2,q)$, i.e.\ there exists $g\in{\rm SO}(H^2(X,\CC),q)$ with $g(V)=V'$. Since $g$ lifts to an automorphism of the graded $\CC$-algebra $H^\ast(X,\CC)$, the induced $\CC[x_1,x_2]/(x_1,x_2)^{n+1}$-algebra structures are isomorphic.
\end{proof}

\begin{cor}\label{cor:main}
Let $\kx\to\Delta$ be a type II degeneration of compact
hyperk\"ahler manifolds with $b_2\geq 5$, and let
 $\beta\in H^{1,1}(X)$ be a non-trivial isotropic class  on a smooth fibre $X\coloneqq
 \kx_t$, $t\ne0$. Then the monodromy operator $N$ and the operator $M=[L_\beta,\Lambda^c_\sigma]$, 
 for $0\ne\sigma\in H^{2,0}(X)$, are conjugate under a graded automorphism of the algebra $H^*(X,\CC)$. 
\end{cor}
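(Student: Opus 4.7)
The plan is to reduce the conjugacy to a matching between $N_2$ and $M_2$ on $H^2(X,\CC)$, lift it via Proposition \ref{prop:C[x]/x^n+1}, and then extend from $H^2$ to $H^*$ through the LLV algebra.

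By Lemma \ref{lem:SchrSold} and equation (\ref{eqn:Mq}), both $N_2$ and $M_2$ admit the Schreieder--Soldatenkov normal form $\alpha\mapsto q(w_2,\alpha)\,w_1-q(w_1,\alpha)\,w_2$, so each is determined up to a scalar by its image, which in both cases is an isotropic plane: $V\coloneqq{\rm Im}(N_2)\subset H^2(X,\CC)$ for $N_2$ (Lemma \ref{lem:SchrSold}) and $V'\coloneqq\langle\beta,\bar\sigma\rangle$ for $M_2$. Since $b_2\geq 5$, the argument in the proof of Corollary \ref{cor:C[x]/x^n+1} produces $g\in{\rm SO}(H^2(X,\CC),q)$ with $g(V)=V'$; the normal form then gives $g\circ N_2\circ g^{-1}=c\cdot M_2$ for some $c\in\CC^*$. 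Rescaling $\beta$ or $\sigma$ only rescales $M$, so we may absorb $c$ into the normalization and assume $c=1$. By Proposition \ref{prop:C[x]/x^n+1}, $g$ lifts to a graded algebra automorphism $\Phi$ of $H^*(X,\CC)$, and $\Phi\circ N\circ\Phi^{-1}$ is then a degree-preserving derivation (by Lemma \ref{lem:Nderivation} and the fact that $\Phi$ is an algebra automorphism) that agrees with $M$ on $H^2$.

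The main obstacle is to extend this agreement from $H^2$ to all of $H^*(X,\CC)$, since $H^*$ is in general not generated as an algebra by $H^2$. The plan here is to locate both derivations inside the Looijenga--Lunts--Verbitsky algebra $\mathfrak{g}(X)\cong\mathfrak{so}(\tilde H(X,\CC))$ of Remark \ref{rem:confusingLambda}: $M=[L_\beta,\Lambda^c_\sigma]$ sits in $\mathfrak{g}(X)$ by construction, and the monodromy logarithm $N$ of a type II degeneration of compact hyperk\"ahler manifolds is known to lie in $\mathfrak{g}(X)$ as well (this is the nontrivial input I would invoke). Both operators are nilpotent and preserve the grading, so they sit in $\mathfrak{g}'\cong\mathfrak{so}(H^2,q)$, the semisimple summand of $\mathfrak{g}_0=\mathfrak{g}'\oplus\CC\,h$ (the counting operator $h$ is semisimple with nonzero eigenvalues, so nilpotent degree-zero elements of $\mathfrak{g}(X)$ land in $\mathfrak{g}'$). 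Under the identification $\mathfrak{g}'=\mathfrak{so}(H^2,q)$ the action on $H^2$ is faithful, so the equality $\Phi\circ N\circ\Phi^{-1}=M$ on $H^2$ propagates to all of $H^*(X,\CC)$, which yields the claim.
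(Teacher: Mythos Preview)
Your argument is correct and follows essentially the same route as the paper: match the isotropic planes ${\rm Im}(N_2)$ and $\langle\beta,\bar\sigma\rangle$ by an element of ${\rm SO}(H^2,q)$, lift it to a graded algebra automorphism via Proposition~\ref{prop:C[x]/x^n+1}, and then use that $N$ (hence its conjugate) and $M$ both lie in the degree-zero part $\mathfrak{so}(H^2)\oplus\CC h$ of the LLV algebra, where the action on $H^2$ is faithful. The paper cites \cite[Prop.\ 3.5]{SoldMoscow} for the fact that $N\in\mathfrak{g}(X)$, which is exactly the ``nontrivial input'' you flag; the only step you leave implicit (as does the paper) is that a graded algebra automorphism normalizes $\mathfrak{g}(X)$, so $\Phi N\Phi^{-1}\in\mathfrak{g}(X)$ as well.
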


\begin{proof}
According to (\ref{eqn:SchrSold}) and (\ref{eqn:Mq}) we can write
$N_2=q(w_2,~)\,w_1-q(w_1,~)\,w_2$ for a certain basis $w_1,w_2$ of the
isotropic plane ${\rm Im}(N_2)\subset H^2(X,\CC)$
and, up to a scaling factor, $M_2=q(\beta,~)\,\bar\sigma-q(\bar\sigma,~)\,\beta$.
Now choose a graded algebra automorphism of $H^*(X,\CC)$ that maps
the basis $w_1,w_2$  
of the image of $N_2$ to the basis $\beta,\bar\sigma$ of 
the image of $M_2$. 
Let $N'$ denote the conjugate of the logarithmic monodromy
operator $N$ under this algebra isomorphism. 

Then $N'_2=M_2$ up to a scaling factor and both operators $N'$ and $M$ are contained in the total Lie algebra ${\mathfrak g}(X)$.  For $N'$ this follows from the corresponding assertion for $N$, which is a consequence of \cite[Prop.\ 3.5]{SoldMoscow}, see also \cite[Sec.\ 4]{GKLR}, and for $M=[L_\beta,\Lambda^c_\sigma]$ it holds by definition.
Also, $N'$ and $M$ are both of degree zero and hence contained in the degree zero part of ${\mathfrak g}(X)$. Now use that the degree zero part of ${\mathfrak g}(X)$ is isomorphic to $\mathfrak{so}(H^2(X,\CC))\oplus\CC h$, where $h$ is the degree operator. Hence,  any two degree zero elements of ${\mathfrak g}(X)$ with identical action on $H^2(X,\CC)$ coincide. Thus, the fact that $N'_2=M_2$ immediately
implies $N'=M$.
\end{proof}

\begin{remark}[$b_2=4$]\label{rmk:b2=4}
Note that the existence of an isotropic plane implies $b_2\geq 4$. If $b_2=4$, weaker versions of Corollaries \ref{cor:C[x]/x^n+1} and \ref{cor:main} hold. In this case, the set of isotropic planes is the Fano variety of lines on a quadric in $\PP^3$, which is the disjoint union of two lines. In particular, ${\rm SO}(H^2,q)$ does not act transitively on it. However, the special orthogonal group exchanges any two lines in the same component or, equivalently, whenever
their corresponding isotropic planes are transverse. Therefore, Corollary \ref{cor:C[x]/x^n+1} holds for $b_2=4$ too provided that $V$ and $V'$ are transverse. 

Furthermore, if $b_2=4$, the proof of Corollary \ref{cor:main} does not work, because the isotropic planes $\mathrm{Im}(N_2)$ and $\langle \beta, \bar{\sigma}\rangle$ may not be transverse. Nonetheless, 
we can always replace $\beta$ by a non-trivial isotropic class in $H^{1,1}(X)$ with this property. For instance, if $\beta$ does not work, exchange it with the class $\eta$ to be constructed in the proof of Theorem \ref{thm:Nagai}. We conclude that even for $b_2=4$, the monodromy operator $N$ in Corollary \ref{cor:main} is conjugate to $M =[L_\beta,\Lambda^c_\sigma]$ for some non-trivial isotropic class $\beta \in H^{1,1}(X)$. In particular, we do not need any restriction on Betti number $b_2$ for Theorems \ref{thm:main} and \ref{thm:Nagai}.  
\end{remark}

\subsection{{ \it Proof of Theorem \ref{thm:main}}}\label{sec:Proofmain}

According to Corollary \ref{cor:main} (and Remark \ref{rmk:b2=4}), the monodromy operator $N$ and the operator $M$
correspond to each other under a graded algebra automorphism of $H^\ast(X,\CC)$.
Consider their actions on $H^{2k}(X,\CC)$ for $k\leq n$.  Since $M^{n+1}=0$ on $H^{2k}(X,\CC)$
by Proposition \ref{prop:MNagai}, we can conclude that $N^{n+1}=0$ holds as well.
The stronger statement $N^n=0$ on $H^{2k}(X,\CC)$ for $2k<2n$ follows from  Remark \ref{rem:improv}, (ii).

Furthermore, assuming (\ref{eqm:Extraassumption}), we deduce from Corollary \ref{cor:extra} that
$M^{k+1}=0$ on $H^{2k}(X,\CC)$, which again by  Corollary \ref{cor:main} implies the assertion $N^{k+1}=0$.
\qed

\section{Nagai's conjecture and the perverse filtration}\label{sec:levelNagai}
 Let $X$ be a compact hyperk\"ahler manifold of complex dimension $2n$. In this section we show that, quite remarkably, Nagai's conjecture prescribes the shape of the perverse filtration associated to a Lagrangian fibration $f\colon X \to B$. 
 
 \subsection{}\label{defn:filtration} We start by recalling the filtration associated to a nilpotent operator,  see \cite[\S 1.6]{Deligne}.

 Given a nilpotent endomorphism $N$ of a finite dimensional vector space $V$ of index of nilpotence
 $k$, i.e.\ $N^{k}\neq 0$ and $N^{k+1}= 0$, the \emph{weight filtration of $N$ centred at $k$} is the unique increasing filtration
  $W^N_0V\subset W^N_1V\subset\cdots \subset W^N_{2k-1}V\subset W^N_{2k}V=V$,
   such that $$N W^N_i \subseteq W^N_{i-2}\text{ and }N^i \colon \Gr^N_{k+i} V \congpf \Gr^N_{k-i}V.$$
Here, we use $N$ to denote also the induced maps on the graded pieces $\Gr^N_{i} V \coloneqq W^N_iV/W^N_{i-1}V$.

\smallskip
  
 \begin{ex} Let us consider some geometric examples.
  \smallskip

{\rm   (i)} For  $0\ne \bar{\sigma}\in H^{0,2}(X)$, the weight filtration of the nilpotent operator $L_{\bar{\sigma}}$ on $H^*(X, \CC)$ centred at $n$ is the conjugate Hodge filtration
  , i.e.\
  $W^{L_{\bar{\sigma}}}_{i}H^*(X, \CC)= \sum_{q\geq 2n- i}H^{p,q}(X)$, see \cite[Prop.\ 2.6]{Fujiki}.  
  \smallskip
  
 
{\rm   (ii)} Assume $f\colon X\to B$  is a Lagrangian fibration.
  Up to a scalar factor, there exists a unique class $\beta\in H^2(X, \QQ)$ which is the pullback of an ample class in $H^2(B, \QQ)$, see \cite{Matsushita}. The weight filtration of $L_\beta$  on $H^*(X, \QQ)$ centred at $n$ is the perverse filtration
  $$
  P_i H^d(X, \QQ)\coloneqq\sum_{j\geq 0}
  \beta^j \cdot {\rm Ker}\left(\beta^{n-(d-2j)+i+1}\colon H^{d-2j}(X, \QQ)\to H^{2n-d+2j+2i+2}(X, \QQ)\right),$$
   i.e.\ $W^{L_\beta}_i H^*(X, \QQ) \cap H^d(X, \QQ)=P_{d+i-2n}H^{d}(X, \QQ)$, see  \cite[Thm.\ 2.1.5]{dCM}.
  In particular, the graded pieces $\Gr^P_{i}H^d(X, \QQ)$
  are pure Hodge structures of weight $d$.
  
  \smallskip
  
{\rm (iii)} Let $\kx\to\Delta$ be a type II degeneration of compact
hyperk\"ahler manifolds with unipotent monodromy and $X=\kx_t$, $t\ne0$, 
a fixed smooth fibre. Up to a shift, the weight filtration  of the logarithmic monodromy operator $N$ on $H^*(X, \CC)$ centred at $n$ coincides with the monodromy filtration $W_{\rm mon}$ 
 defined in \cite[Sec.\ 11.2.5]{PS}. On the other hand, by Corollary \ref{cor:main}, the weight filtration of $N$ is conjugate to the weight filtration of $M=[L_{\beta}, \Lambda^c_{\sigma}]$. Together this becomes
  \[\Gr^{W_{\rm mon}}_{k+j}H^k(X, \CC) \simeq \Gr^N_{n+j}H^k(X, \CC)\simeq \Gr^M_{n+j}H^k(X, \CC).\] 
  Observe that the dimension of $\Gr^N_{n+j}H^k(X, \CC)$ is independent of the choice of $\kx\to\Delta$ by Corollary \ref{cor:main}, see \cite{KLSV, SoldMoscow} for the analogous statements in type I and III.
  \end{ex}
 

\subsection{} The  goal of this section is to provide an equivalent formulation of Nagai's conjecture, see
\cite[Prop.\ 1.15]{GKLR} for another one using the representation theory of the LLV algebra. We begin by
establishing a link between the monodromy filtration and the perverse filtration.

As before, $X$ denotes a compact hyperk\"ahler manifold of complex dimension $2n$.

\begin{thm}\label{thm:Nagai}
Let $N$ be the logarithmic monodromy operator on $H^*(X, \CC)$ of a type II degenerations
and assume that there exists a Lagrangian fibration $f\colon X \to B$. For the two induced filtrations there exists an isomorphism
 \begin{equation}\label{eq:MP}
\Gr^N_{n+j}H^{\ell}(X, \CC)\simeq \bigoplus_{p+q=\ell}\Gr^P_{j+q}H^{p,q}(X).
\end{equation}
\end{thm}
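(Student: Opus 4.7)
The strategy is to transfer the statement from the monodromy operator $N$ to the geometric operator $M=[L_\beta,\Lambda^c_\sigma]$ via Corollary \ref{cor:main}, and then to interpret the weight filtration of $M$ as a ``convolution'' of two simpler weight filtrations. First, I take $\beta\coloneqq f^*h_B$ for $h_B$ an ample class on the base $B$. Since $f$ is Lagrangian, $\beta$ is a non-trivial isotropic class in $H^{1,1}(X)$, and Corollary \ref{cor:main} (together with Remark \ref{rmk:b2=4}) provides a graded algebra automorphism of $H^*(X,\CC)$ conjugating $N$ to $M$. This reduces (\ref{eq:MP}) to the isomorphism $\Gr^M_{n+j}H^\ell\simeq\bigoplus_{p+q=\ell}\Gr^P_{j+q}H^{p,q}$.

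The right-hand side admits a bi-graded reformulation. By Example \ref{defn:filtration}(i), the Hodge component $H^{p,q}\subset H^\ell$ equals the graded piece $\Gr^{L_{\bar\sigma}}_{2n-q}H^\ell$ of the weight filtration of $L_{\bar\sigma}$ centred at $n$, while Example \ref{defn:filtration}(ii) identifies the perverse filtration with the weight filtration of $L_\beta$ centred at $n$ (up to the shift by $\ell-2n$ on $H^\ell$). Substituting $b=n-q$ and $a=j+q-n$, so that $a+b=j$ and $n+a=j+q$, rewrites the right-hand side as $\bigoplus_{a+b=j}\Gr^{L_\beta}_{n+a}\Gr^{L_{\bar\sigma}}_{n+b}H^\ell$. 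Hence it suffices to prove that the weight filtration of $M$ on $H^\ell$ is the total-weight filtration attached to the bi-filtration $(W^{L_\beta},W^{L_{\bar\sigma}})$.

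This convolution identity is a purely Lie-theoretic statement inside the LLV algebra $\mathfrak{g}(X)\cong\mathfrak{so}(\tilde H(X,\CC))$. The commuting pair $L_\beta,L_{\bar\sigma}\in\mathfrak{g}_2$ (cup product is commutative) induces a joint bi-filtration on $H^*(X,\CC)$, and $M\in\mathfrak{g}_0$ acts as a derivation on this structure. I would approach the identification by noting that $L_\beta$ is a highest weight vector of weight $1$ for the adjoint $\mathfrak{sl}_2$-triple $(L_\sigma,\Lambda^c_\sigma,H_\sigma)$, since $[L_\sigma,L_\beta]=0$, and $M=-\mathrm{ad}(\Lambda^c_\sigma)(L_\beta)$ is the image under the lowering operator. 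The hardest part will be converting this representation-theoretic input into the weight-filtration identity; one concrete route is to reduce, using the transitive action of the orthogonal group on isotropic planes in $H^2$, to a standard normal form, and then verify the identity by direct computation on the irreducible $\mathfrak{so}(\tilde H(X,\CC))$-constituents of $H^*(X,\CC)$, keeping careful track of the centerings at $n$.
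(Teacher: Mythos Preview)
Your reduction to $M$ via Corollary \ref{cor:main} and your bi-graded reformulation of the right-hand side are exactly the right moves, and coincide with the paper's setup. However, the crucial step---showing that the weight filtration of $M$ is the ``convolution'' of $W^{L_\beta}$ and $W^{L_{\bar\sigma}}$---is where your proposal becomes a sketch rather than a proof. You note that $M=-\mathrm{ad}(\Lambda^c_\sigma)(L_\beta)$ and propose to check the identity on irreducible $\mathfrak{so}(\tilde H)$-constituents, but this is circuitous and does not obviously avoid the need to complete $M$ to an $\mathfrak{sl}_2$-triple. The paper does precisely this, and in a very clean way: it introduces a second isotropic $(1,1)$-class $\eta\coloneqq\omega_I-\beta$ with $q(\beta,\eta)=1$, and uses Proposition \ref{prop:C[x]/x^n+1} to produce a graded algebra involution $\varphi$ sending $\sigma\mapsto\beta$ and $\bar\sigma\mapsto\eta$. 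Conjugating the $\mathfrak{sl}_2$-triples for $\sigma,\bar\sigma$ by $\varphi$ produces $\mathfrak{sl}_2$-triples $(L_\beta,\Lambda_\eta,H_\beta)$ and $(L_\eta,\Lambda_\beta,H_\eta)$, and then one checks directly in $\mathfrak{so}(H^2)$ that $E_M\coloneqq 2M$, $F_M\coloneqq 2[\Lambda^c_{\bar\sigma},L_\eta]$, $H_M\coloneqq H_\beta-H_\sigma$ is an $\mathfrak{sl}_2$-triple. Since $H_\sigma=(p-n)\,\mathrm{id}$ on $H^{p,q}$ and $H_\beta$ has eigenvalue $p+q-i-n$ on $\Gr^P_iH^{p,q}$, the semisimple element $H_M$ has the required eigenspaces and the theorem follows at once. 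Your proposed route never introduces $\eta$ or $F_M$, and without a semisimple element there is no direct access to the graded pieces of $W^M$.

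A minor point: your index bookkeeping is off. From Example (ii) one has $\Gr^{L_\beta}_{n+a}H^{p,q}=\Gr^P_{p+q+a-n}H^{p,q}$, so matching $\Gr^P_{j+q}H^{p,q}$ forces $a=j+n-p$, hence $a+b=(j+n-p)+(n-q)=j+2n-\ell$, not $a+b=j$. This is harmless (it reflects the constant $H_\sigma+H_{\bar\sigma}=\ell-2n$ on $H^\ell$) but worth correcting.
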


\begin{proof}
Let $(g, I, J, K)$ be a hyperk\"{a}hler structure on the complex manifold $X$, i.e.\ a Riemannian metric $g$ which is K\"{a}hler with respect to three complex structures $I$ (the one defining $X$), $J$, and $K$  satisfying the quaternion relations $I^2=J^2=K^2=IJK=-1$. 0 Assume $q(\omega_I)=q(\omega_J)=q(\omega_K)=2$ and
let $\beta = f^*\alpha$ be the pullback of an ample class $\alpha \in H^2(B, \QQ)$ normalized such that $q(\omega_I,\beta)=1$.  
The class $\eta \coloneqq \omega_I - \beta$ is isotropic, i.e.\ $q(\eta)=0$. 
Note also that $q(\beta, \eta)=q(\beta, \omega_I)=q(\bar{\sigma}, \sigma)=1$. We shall denote by $U\subset H^2(X,\CC)$ the orthogonal complement of $\langle \beta, \omega_I, \omega_J, \omega_K \rangle$.

By Proposition \ref{prop:C[x]/x^n+1}, Corollary \ref{cor:C[x]/x^n+1} and Remark \ref{rmk:b2=4}, there exists an automorphism $\varphi \in \operatorname{Aut}(H^*(X, \CC))$ of the graded algebra $H^*(X, \CC)$ which is an involution on $H^2(X,\CC)$ with $\varphi(\sigma)=\beta$, $\varphi(\eta)= \bar\sigma$, and $\varphi|_U={\rm id}$. Let $y\coloneqq \varphi(\omega_J)$, $z\coloneqq \varphi(\omega_K) \in H^2(X,\CC)$. 
Then the following are all $\mathfrak{sl}_2$-triples:
{\small $$E_{\sigma}\coloneqq L_{\sigma} = \frac{1}{2}L_{\omega_J}+\frac{\sqrt{-1}}{2}L_{\omega_K},~ F_{\sigma}\coloneqq \Lambda_\sigma^c=\Lambda_{\bar\sigma}= \frac{1}{2}\Lambda_{\omega_J}-\frac{\sqrt{-1}}{2}\Lambda_{\omega_K},~H_{\sigma}=(p-n)\,{\rm id} \text{ on } H^{p,q}(X),$$}
{\small $$E_{\bar{\sigma}}\coloneqq L_{\bar{\sigma}} = \frac{1}{2} L_{\omega_J}-\frac{\sqrt{-1}}{2}L_{\omega_K},~F_{\bar{\sigma}}\coloneqq \Lambda^c_{\bar\sigma}= \Lambda_{\sigma} = \frac{1}{2}\Lambda_{\omega_J}+\frac{\sqrt{-1}}{2}\Lambda_{\omega_K},~H_{\bar{\sigma}}=(q-n)\, {\rm id} \text{ on } H^{p,q}(X),$$}
{\small $$E_{\beta}\coloneqq \varphi E_{{\sigma}} \varphi^{-1}=L_{\beta},~ F_{\beta}\coloneqq \varphi F_{{\sigma}} \varphi^{-1}=\Lambda_\eta= \frac{1}{2}\Lambda_{y}-\frac{\sqrt{-1}}{2}\Lambda_{z},~ H_{\beta}=[E_{\beta}, F_{\beta}]=\varphi H_{\sigma} \varphi^{-1},$$}
and
{ $$ E_{\eta}\coloneqq \varphi E_{\bar\sigma} \varphi^{-1}= L_{\eta},~ F_{\eta}\coloneqq \varphi F_{\bar\sigma} \varphi^{-1}=\Lambda_\beta,~H_{\eta}=\varphi H_{\bar{\sigma}} \varphi^{-1}.$$}
Since $H_\beta$ is a morphism of Hodge structures, we obtain the decomposition 
\begin{equation}\label{eq:decomp}
H^{p,q}(X) = \bigoplus_{i} V^{p,q,i}
\end{equation}
such that 
$H_{\beta}=(p+q-i-n)\,{\rm id}$ on $V^{p,q,i}$. In particular, the representation theory of $\mathfrak{sl}_2$-triples gives $ V^{p,q,i} \simeq \Gr^P_i H^{p,q}(X)$. Next one observes that  
{\small\begin{equation}\label{eq:MMM} E_{M}\coloneqq 2M = 2[E_{\beta}, F_{{\sigma}}]=2[L_\beta,\Lambda_\sigma^c],~ F_{M}\coloneqq 2[F_{\bar\sigma},E_{\eta}]=2[\Lambda_{\bar\sigma}^c, L_\eta],\\~
H_{M}\coloneqq[E_{M}, F_{M}]=H_{\beta}-H_{\sigma}
\end{equation}}
is an $\mathfrak{sl}_2$-triple. Indeed, by \cite[Lem.\ 3.9]{KSV} or \cite[Thm.\ 2.7]{GKLR}, see also Section \ref{sec:betaM},
\[E_{M}|_{H^2(X, \CC)}=q(\beta,\,)\,\bar{\sigma}-q(\bar{\sigma},\,)\,\beta\text{ and } F_{M}|_{H^2(X, \CC)}=q(\sigma,\,)\,\eta-q(\eta,\,)\, \sigma.\]
From here it is straightforward to check that $(E_M, F_M,H_M)$ is an $\mathfrak{sl}_2$-triple in the Lie algebra $\mathfrak{so}(H^2(X, \CC))$ and, hence, also in the LLV algebra ${\mathfrak g}(X)\cong\mathfrak{so}(\tilde H(X,\CC))$.

 Finally by \eqref{eq:MMM}, the eigenspaces of $H_{M}$ are $\bigoplus_{p+q=\ell}V^{p,q,j+q} \simeq \Gr^M_{n+j} H^{\ell}(X,\CC)$, and hence
 \begin{equation*}
\Gr^N_{n+j}H^{\ell}(X, \CC) \simeq \Gr^M_{n+j}H^{\ell}(X, \CC)\simeq \bigoplus_{p+q=\ell}\Gr^P_{j+q}H^{p,q}(X),
\end{equation*}
which concludes the proof of the theorem.
\end{proof}

  \begin{remark}\label{rmk:generalbeta}
  Despite being evocative from a geometric viewpoint, it is not necessary to assume the existence of a Lagrangian fibration in Theorem  \ref{thm:Nagai}. Indeed, in the above proofs we can replace $\beta$ with any non-zero isotropic class of type $(1,1)$ by \cite[Prop.\ 2.2]{HM}.
  \end{remark}

The theorem immediately yields an equivalent reformulation of Nagai's conjecture in terms of the perverse filtration.

\begin{cor}\label{cor:Nagaiconjperverse}
The Nagai conjecture (for type II degenerations)
holds if and only if the Hodge structure $\Gr^P_{i}H^{2k}(X, \QQ)$ has level at most $2k-2|i-k|$. 
\end{cor}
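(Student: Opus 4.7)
The plan is to deduce the corollary directly from Theorem \ref{thm:Nagai}. Since Verbitsky's lower bound $\nilp(N_{2k}) \geq k$ holds unconditionally, Nagai's conjecture is equivalent to the upper bound $\nilp(N_{2k}) \leq k$. In the shifted weight filtration $W^N$ on $H^{2k}(X,\CC)$ centred at $n$, this amounts to the vanishing
\[
\Gr^N_{n+j} H^{2k}(X,\CC) = 0 \quad \text{for all } |j| > k.
\]

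Substituting the isomorphism \eqref{eq:MP} and reindexing by $i \coloneqq j + q$, so that $q = i - j$ and $p = 2k - i + j$, the vanishing above translates into: for all $i$ and all $(p,q)$ with $p + q = 2k$, one has $\Gr^P_i H^{p,q}(X) = 0$ whenever $p > 3k - i$ (the contribution from $j > k$) or $p < k - i$ (the contribution from $j < -k$).

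To package these two one-sided bounds into the claimed level statement, I would invoke the Hodge symmetry $\dim \Gr^P_i H^{p,q} = \dim \Gr^P_i H^{q,p}$, which holds because $\Gr^P_i H^{2k}(X,\QQ)$ is a pure Hodge structure of weight $2k$. Under $p \mapsto 2k - p$, the asymmetric range $k - i \leq p \leq 3k - i$ is symmetrised to $|i-k| \leq p \leq 2k - |i-k|$ around the centre $p = k$, equivalently $|p-k| + |i-k| \leq k$. Since the level of a weight-$2k$ Hodge structure equals $2(p_{\max} - k)$, this is precisely the assertion that $\Gr^P_i H^{2k}(X,\QQ)$ has level at most $2k - 2|i-k|$, giving one direction. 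The converse runs the same indexing backwards, since every step of the translation is a bijection of indices.

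I expect no genuine obstacle: once Theorem \ref{thm:Nagai} is in hand, the corollary is a bookkeeping exercise. The only point requiring care is the symmetrisation step, where the two individually Hodge-asymmetric vanishing ranges (from $j > k$ and $j < -k$) must be combined via complex conjugation on $\Gr^P_i H^{2k}$ to yield the single, symmetric level bound $2k - 2|i-k|$.
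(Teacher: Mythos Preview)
Your proposal is correct and follows the same approach as the paper: translate $\nilp(N_{2k})\leq k$ into the vanishing $\Gr^N_{n+j}H^{2k}=0$ for $|j|>k$, apply Theorem~\ref{thm:Nagai}, and reindex to obtain the level bound on $\Gr^P_iH^{2k}$. You have in fact made explicit the one step the paper's three-line proof glosses over, namely that Hodge symmetry of the rational structure $\Gr^P_iH^{2k}(X,\QQ)$ is needed to convert the one-sided range $k-i\leq p\leq 3k-i$ into the symmetric level statement.
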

\begin{proof}
The Nagai conjecture for $N$ says that $\nilp(N_{2k})=k$, i.e.\
$\Gr^N_{n+j}H^{2k}(X, \CC)=0$ for $|j|>k$, see Section \ref{defn:filtration}. By Theorem \ref{thm:Nagai} this condition is equivalent to $\Gr^P_{i}H^{p,q}(X)=0$ for $|i-q|>k$, i.e.\ $\Gr^P_{i}H^{2k}(X, \QQ)$ has level at most $2k-2|i-k|$.
\end{proof}

The $(q,i)$-entry in the following picture is the direct summand $V^{2-q,q,i}$ of $H^2(X, \CC)$.
\begin{figure}[h]
      \centering
    \begin{tikzpicture}
    \matrix (m) [matrix of math nodes,
      nodes in empty cells,nodes={minimum width=5ex,
      minimum height=1ex,outer sep=-5pt},
      column sep=1ex,row sep=1ex]{ 
                  & &  &    & \\
            2     & & \eta &    & \\
            1     & \sigma & U  & \bar{\sigma} & \\
            0     & &   \beta &    & \\
      \quad\strut & 0 & 1 & 2  &  \strut \\};
  \draw (m-1-1.east) -- (m-5-1.east) ;
  \draw (-2.5,-1) -- (2,-1);
  \node[below] at (2.5,-1) {$q$};
  \node[above] at (-2.5,1.5) {$i$};
  \node[above] at (0.9, 1.5) {\small $\Gr^M_{n+1}$};
  \node[above] at (1.4, 0.9) {\small $\Gr^M_n$};
  \node[above] at (2, 0.3) {\small $\Gr^M_{n-1}$};
  \node[above] at (-0.5, -3.5) {};
  \begin{scope}[rotate=180+\inc]
      \draw (-1.25, 0.55) rectangle ++ (2,-0.45);
      \draw (-1.1, -0) rectangle ++ (2,-0.45);
      \draw (-0.95,-0.55) rectangle ++ (2,-0.45);
    \end{scope}
  \end{tikzpicture}
  \vspace{-1.9 cm}
    \label{fig:my_label}
  \end{figure}

 The sums along the columns give the Hodge decomposition of $H^2(X, \QQ)$; the sums along the rows split the perverse filtration on $H^2(X, \QQ)$; the sums along the northeast-southwest diagonals split the weight filtration of $M$. Complex conjugation and the conjugate via $\varphi$ of complex conjugation account for the symmetries of this diamond, respectively the reflection about the vertical and horizontal middle axes.
 The Nagai conjecture predicts the existence of these diamonds in any cohomological degree.
\smallskip

The arguments above also yield the following observation concerning the
cohomology of odd degree of a compact hyperk\"ahler manifold, see also \cite[Lem.\ 1.2]{Fujiki} or \cite[Cor.\ 8.1]{Wak58} where the hypothesis $b_{2}\geq 4$ is not needed.

\begin{cor}
If $b_{2}\geq 4$, then all odd Betti numbers $b_{2k-1}$ are divisible by $4$. 
\end{cor}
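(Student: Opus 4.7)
The plan is to combine the simultaneous eigenspace decomposition provided by Theorem~\ref{thm:Nagai} with an orbit count under a dihedral group of natural involutions.

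First, since $b_2\geq 4$ the Beauville--Bogomolov form has signature $(1,h^{1,1}-1)$ with $h^{1,1}-1\geq 1$ on $H^{1,1}(X,\RR)$, so a non-zero real isotropic class $\beta\in H^{1,1}(X)$ exists. By Remark~\ref{rmk:generalbeta} the construction of Theorem~\ref{thm:Nagai} applies and yields the simultaneous eigenspace decomposition $H^{p,q}(X)=\bigoplus_{i}V^{p,q,i}$ under the four commuting operators $H_\sigma,H_{\bar\sigma},H_\beta,H_\eta$. A short computation using $\varphi H_{\bar\sigma}\varphi^{-1}=H_\eta$ and the identity $\varphi V^{p,q,i}=V^{p+q-i,\,i,\,q}$ (which in turn follows from $\varphi H_\sigma\varphi^{-1}=H_\beta$ and the fact that $\varphi$ preserves cohomological degree) shows that the four eigenvalues on $V^{p,q,i}$ are $p-n$, $q-n$, $p+q-i-n$ and $i-n$ respectively.

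Next I would introduce two involutions on the index set $\{(p,q,i):p+q=2k-1\}$. Complex conjugation $\tau$ swaps $\sigma\leftrightarrow\bar\sigma$ and fixes $\beta,\eta$ (both real), giving an antilinear isomorphism $V^{p,q,i}\simeq V^{q,p,i}$. The algebra automorphism $\varphi$ from Theorem~\ref{thm:Nagai} induces a complex-linear isomorphism $V^{p,q,i}\simeq V^{p+q-i,\,i,\,q}$; although $\varphi^2$ need not act as the identity on $H^{\geq 3}$, the induced map on indices is still an involution, and the equality of complex dimensions is realised by $\varphi$ itself. Both isomorphisms preserve $\dim_\CC V^{p,q,i}$.

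Finally I would analyse the action of $G=\langle\tau,\varphi\rangle$ on the index set. A direct calculation shows $(\tau\varphi)^4=e$ on this set (using $p+q=2k-1$) while $(\tau\varphi)^2(p,q,i)=(q,p,2k-1-i)$ has no fixed points, because both $p=q$ and $2i=2k-1$ fail by parity; hence $G$ is the dihedral group of order~$8$. The four reflections in $G$ are $\tau,\varphi,\tau\varphi\tau,\varphi\tau\varphi$, with respective fixed loci $\{p=q\}$, $\{i=q\}$, $\{i=p\}$, $\{2i=p+q\}$. The odd-parity assumption eliminates the first and fourth loci, while the second and third combine to produce the single orbit $\{(p,q,q),(q,p,q),(p,q,p),(q,p,p)\}$ of size $4$; all remaining orbits have size $8$. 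Summing $\dim_\CC V^{p,q,i}$ orbit by orbit therefore yields $b_{2k-1}\in 4\ZZ$. The only subtlety is the bookkeeping of orbit sizes, but the oddness of $p+q$ rules out all accidental coincidences that could reduce an orbit below size~$4$.
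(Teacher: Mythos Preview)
Your argument is correct and follows the same strategy as the paper: both use the refined decomposition $H^{p,q}=\bigoplus_i V^{p,q,i}$ together with the two involutions coming from complex conjugation and the algebra automorphism $\varphi$, and then exploit the parity of $p+q=2k-1$ to force every block to appear with multiplicity divisible by $4$. The paper packages this more tersely by writing down four isomorphic pieces and a fundamental domain $\{p<k,\ i<k\}$, whereas you spell out the full dihedral $D_4$-action and its orbit sizes; the only minor slip is that your justification of $\varphi\,V^{p,q,i}=V^{p+q-i,\,i,\,q}$ needs one extra ingredient beyond ``$\varphi H_\sigma\varphi^{-1}=H_\beta$ and degree preservation'', namely that $\varphi^2$ commutes with $H_\sigma$ (which follows since $\varphi^2$ acts trivially on $H^2$ and the LLV operators are determined there).
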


\begin{proof} If $b_{2}\geq 4$, there exists a non-zero isotropic class $\beta$ of type $(1,1)$. Consider then the involution $\varphi$ in the proof of Theorem \ref{thm:Nagai} and the decomposition \eqref{eq:decomp}, both well-defined by Remark \ref{rmk:generalbeta}.
The map $\varphi$ exchanges the eigenvectors of $H_{\bar{\sigma}}$ and $H_{\beta}$, i.e.\
$\varphi(V^{p,q,i})=V^{i,p+q-i,p}$. 
By complex conjugation, we obtain
\begin{equation}\label{eq:symmetries}
V^{q,p,i}\simeq V^{p,q,i}\simeq V^{i,p+q-i,p} \simeq  V^{p+q-i, i,p}.
\end{equation}
We conclude that
\[
H^{2k-1}(X, \CC) = \bigoplus_{p+q=2k-1}  V^{p,q,i}\simeq \bigoplus_{p+q=2k-1, p< k, i < k}  (V^{p,q,i})^{\oplus 4},
\]
which proves the assertion.
\end{proof}


\section{Odd degree}\label{sec:Odd}

We shall now discuss the action of the logarithmic monodromy on the odd degree cohomology. 
Again, $\kx\to\Delta$ is a type II degeneration of compact hyperk\"ahler manifolds of dimension $2n$. 

\begin{thm}\label{thm:mainodd}
The odd logarithmic monodromy operators $N_{2k-1}$, $k \leq n$, satisfy
$$ \nilp(N_{2k-1})\leq \min\{2k-3, n-1\}.$$ 
Furthermore, assuming condition \emph{(\ref{eqm:Extraassumption})}, in fact ${\rm nilp}(N_{2k-1})\leq k-1$ for all $k\leq n$.
\end{thm}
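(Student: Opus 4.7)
\smallskip
The plan follows the skeleton of Theorem~\ref{thm:main}: by Corollary~\ref{cor:main} (together with Remark~\ref{rmk:b2=4}) it suffices to bound the nilpotence of the auxiliary operator $M=[L_\beta,\Lambda^c_\sigma]$ of bidegree $(-1,1)$ on $H^{2k-1}(X,\CC)$ and then transport the bound to $N$. The extra input beyond the machinery of Section~2 is the standard Bogomolov vanishing $H^0(X,\Omega^r_X)=0$ for every odd $r$, which by Hodge symmetry also yields $H^{0,r}(X)=0$ for $r$ odd.

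\smallskip
For $\nilp(M_{2k-1})\leq 2k-3$, observe that since $H^{2k-1,0}(X)=0$, every nontrivial Hodge component $H^{p,q}(X)$ of $H^{2k-1}(X,\CC)$ satisfies $p\leq 2k-2$; as $M$ has bidegree $(-1,1)$, $M^p$ carries $H^{p,q}(X)$ into $H^{0,2k-1}(X)=0$, so the nilpotence of $M$ on $H^{p,q}(X)$ is at most $p-1\leq 2k-3$. For $\nilp(M_{2k-1})\leq n-1$, the argument parallels Proposition~\ref{prop:MNagai} and Remark~\ref{rem:improv}(ii): for $\alpha\in H^{p,q}(X)$ with $p+q=2k-1$, the case $p\leq n-1$ is immediate from bidegree; if $p\geq n$, write $\alpha=\sigma^{p-n}\wedge\gamma$ with $\gamma\in H^{2n-p,q}(X)$ and expand $M^n(\alpha)$ via \eqref{eqn:Mwedge}. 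The vanishings $M^i(\sigma^{p-n})=0$ for $i>p-n$ and $M^j(\gamma)=0$ for $j>2n-p$ leave only the single term $c\,\beta^{p-n}\wedge M^{2n-p}(\gamma)$, and the second factor lies in $H^{0,\,q+2n-p}(X)$ with $q+2n-p=2n+2k-1-2p$ odd, so it vanishes by Bogomolov.

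\smallskip
For the sharpened bound $\nilp(M_{2k-1})\leq k-1$ under \eqref{eqm:Extraassumption}, a downward induction on $k$ will do, with base case $k=n$ provided by the $n-1$ bound above. The induction step mirrors Corollary~\ref{cor:extra}: given $\alpha\in H^{2k-1}(X,\CC)$ and $\ell$ maximal with $M^{\ell}(\alpha)\neq 0$, assume $\ell\geq k$ for contradiction. For any $w\in H^2(X,\CC)$, the class $\alpha\wedge w\in H^{2k+1}(X,\CC)$ is annihilated by $M^{k+1}$ (inductive hypothesis); expanding $M^{\ell+1}(\alpha\wedge w)$ via \eqref{eqn:Mwedge} and using $M^2(w)=0$ together with $M^{\ell+1}(\alpha)=0$ gives $M^{\ell}(\alpha)\wedge M(w)=0$ for all $w$. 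Since the image of $M_2$ is spanned by $\beta$ and $\bar\sigma$, the class $M^{\ell}(\alpha)$ is annihilated by cup product with both $\beta$ and $\bar\sigma$; condition \eqref{eqm:Extraassumption} then forces $M^{\ell}(\alpha)=0$, a contradiction.

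\smallskip
The key new ingredient compared with Section~2 is the odd-form vanishing $H^{0,\mathrm{odd}}(X)=0$; marrying it with the $\sigma$-decomposition to collapse $M^n(\alpha)$ to the single term $c\,\beta^{p-n}\wedge M^{2n-p}(\gamma)$ and observing that the residual factor lands in an odd-index $(0,\ast)$ component is the main technical step. The remaining bookkeeping is a direct transcription of the even-degree arguments, combined with the conjugation provided by Corollary~\ref{cor:main}.
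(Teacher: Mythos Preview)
Your proof is correct and follows essentially the same route as the paper's own argument: reduce from $N$ to $M$ via Corollary~\ref{cor:main}, use the bidegree of $M$ together with the level of the odd Hodge structure for the $2k-3$ bound, use the $\sigma$-decomposition $\alpha=\sigma^{p-n}\wedge\gamma$ and the vanishing $H^{0,\mathrm{odd}}(X)=0$ for the $n-1$ bound, and run the induction of Corollary~\ref{cor:extra} for the sharpened bound under \eqref{eqm:Extraassumption}. The only cosmetic difference is that the paper absorbs your ``single surviving term'' step into the statement $M^j(\gamma)=0$ for $j\geq 2n-p$ (the case $j=2n-p$ being precisely your odd-$(0,\ast)$ vanishing), and treats $p=n$ separately rather than as the degenerate case $\sigma^0=1$ of the decomposition; your version makes the role of the Bogomolov vanishing more explicit.
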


For any degeneration types one has  $\nilp(N_{2k-1})\leq 2k -3$,
 as the level of the Hodge structure $H^{2k-1}(X,\CC)$ is at most $2k-3$.   For type III degenerations, i.e.\ $\nilp(N_2)=2$, 
 Soldatenkov  proves \cite[Prop.\ 3.15]{SoldMoscow} that the upper bound $2k-3$ is
attained if $H^{3}(X,\CC)\ne0$. In fact, one can show that for type III degenerations
$\nilp(N_{d})$ is always the level of the Hodge structure $H^d(X,\CC)$, see \cite[Prop.\ 3.12]{HM}. For type I degenerations all logarithmic monodromy
operators are trivial, cf.\  \cite{KLSV}.


\subsection{\it Proof of Theorem \ref{thm:mainodd}}
As we argue in Section \ref{sec:Proofmain}, it suffices to show the analogous statements for $M_{2k-1}$. 
Since $M$ is of bidegree $(-1, 1)$ and the Hodge structure of $H^{2k-1}(X, \QQ)$ has level at most $2k-3$, we have $\nilp(M_{2k-1})\leq 2k-3$. 

To show that $\nilp(M_{2k-1})\leq n-1$, we imitate the arguments in the proof of Proposition \ref{prop:MNagai}
to show $M^n=0$ on $H^{2k-1}(X,\CC)$. 
Assume  $\alpha\in H^{p,q}(X)$ with $p+q=2k-1$. If $p< n$, then clearly $M^n(\alpha)=0$
and for $p=n$, we have $M^n(\alpha)\in H^{0,2k-1}(X)=0$.
For $p> n$, we write again $\alpha=\sigma^{p-n}\wedge\gamma$
for some $\gamma\in H^{2n-p,q}(X)$. Then use $M^i(\sigma^{p-n})=0$
for $i>p-n$ and $M^j(\gamma)=0$ for $j\geq 2n-p$ to conclude $M^n(\alpha)=0$.\qed

Note that the induction in the proof of Corollary \ref{cor:extra} under the assumption (\ref{eqm:Extraassumption}) will work here as well. So, assuming (\ref{eqm:Extraassumption}), one can actually show ${\rm nilp}(N_{2k-1})\leq k-1$ for
all $k\leq n$.

\subsection{} Let us now turn to lower bounds, which can not be obtained directly from ${\rm nilp}(N_2)$.
\begin{cor}\label{cor:H3different0} Assume
that the level of the Hodge structure $H^{2k-1}(X, \QQ)$, $k\leq n$, is $2\ell-1>0$. Then
$$\ell \le{\rm nilp}(N_{2k-1}).$$
If $H^3(X, \QQ) \neq 0$, then $\ell=k-1$ and, therefore, 
$$k-1\leq \nilp(N_{2k-1}).$$
In particular, $\nilp(N_3)=1$ and $\nilp(N_{2n-1})=n-1$. 
\end{cor}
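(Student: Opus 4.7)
My plan is to combine Corollary \ref{cor:main}, which gives $\nilp(N_{2k-1})=\nilp(M_{2k-1})$, with the decomposition $H^{p,q}(X)=\bigoplus_i V^{p,q,i}$ from the proof of Theorem \ref{thm:Nagai}. Via the $\mathfrak{sl}_2$-triple $(E_M,F_M,H_M)$ in \eqref{eq:MMM} and the identification $V^{p,q,i}\simeq\Gr^P_i H^{p,q}(X)$, the nilpotence index rewrites as
\[
\nilp(M_{2k-1})=\max\bigl\{\,|i-q|:V^{p,q,i}\neq 0,\ p+q=2k-1\,\bigr\}.
\]
So for the first inequality I need to produce a triple $(p,q,i)$ on the right-hand side with $|i-q|\geq\ell$.

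The hypothesis that the level of $H^{2k-1}(X,\QQ)$ equals $2\ell-1$ yields a non-zero Hodge class in $H^{p_0,q_0}(X)$ with $(p_0,q_0)=(k+\ell-1,k-\ell)$, and hence some $i_0$ with $V^{p_0,q_0,i_0}\neq 0$. The complex conjugation symmetry $V^{p,q,i}\simeq V^{q,p,i}$ recorded in \eqref{eq:symmetries} makes a dichotomy on $i_0$ suffice: if $i_0\geq k$, then already $|i_0-q_0|\geq\ell$; if $i_0\leq k-1$, then $|i_0-p_0|\geq\ell$ and I use the conjugate triple $(q_0,p_0,i_0)$. Either way $\nilp(N_{2k-1})\geq\ell$.

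For the second assertion, the vanishing $h^{3,0}(X)=0$ converts $H^3(X,\QQ)\neq 0$ into $H^{2,1}(X)\neq 0$. The Verbitsky isomorphism $\sigma^{n-2}\colon \Omega_X^2\xrightarrow{\sim}\Omega_X^{2n-2}$ factors as $\sigma^{n-k}\circ\sigma^{k-2}$, so $L_\sigma^{k-2}\colon H^{2,1}(X)\to H^{2k-2,1}(X)$ is injective, forcing $H^{2k-2,1}(X)\neq 0$. Combined with $h^{2k-1,0}=0$, which gives the opposite bound on the level, this pins $\ell=k-1$, and the first part then yields $\nilp(N_{2k-1})\geq k-1$. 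For $k=2$ and $k=n$ this lower bound matches the upper bound of Theorem \ref{thm:mainodd}, delivering $\nilp(N_3)=1$ and $\nilp(N_{2n-1})=n-1$. The only delicate point I anticipate is in the first step, where the complex-conjugation symmetry is essential: a priori $i_0$ could lie anywhere in $\{2k-1-2n,\ldots,2k-1\}$, and it is the symmetry that makes the dichotomy exhaustive and forces a weight of absolute value at least $\ell$.
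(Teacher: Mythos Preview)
Your proof is correct and follows essentially the same route as the paper's own proof. The only cosmetic differences are that the paper works with $H^{k-\ell,k+\ell-1}$ and $\bar\sigma$ where you use the conjugate spaces $H^{k+\ell-1,k-\ell}$ and $\sigma$, and that the paper packages your dichotomy on $i_0$ into the single line $\ell=\min_i\max\{|k-\ell-i|,|k+\ell-1-i|\}\leq\nilp(M_{2k-1})$; the underlying use of the complex-conjugation symmetry \eqref{eq:symmetries} and the identification \eqref{eq:MP} is identical.
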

\begin{proof}
We prove first that $\ell\leq \nilp(M_{2k-1})$. By assumption, $H^{k-\ell,k+\ell-1}(X) \neq 0$, and so there exists an integer $i$ such that $V^{k-\ell,k+\ell-1, i} \neq 0$. Then, by \eqref{eq:symmetries} and \eqref{eq:MP}, $\Gr^M_{n+j}H^{2k-1}(X, \CC) \neq 0$ if $j = |k-\ell-i|$ and $|k+\ell-1-i|$. 
Therefore, we obtain
\[\ell= \min_i  \max\{|k-\ell-i|, |k+\ell-1-i|\}\leq \nilp(M_{2k-1}).
\]

If $H^3(X, \QQ) \neq 0$, the level of the Hodge structure $H^{2k-1}(X, \QQ)$, for $k \leq n$, is exactly $2(k-1)-1$. Indeed, since $\bar{\sigma}^{n-2}$ induces an isomorphism $H^{1,2}(X) \simeq H^{1,2n-2}(X)$, the map $\bar{\sigma}^{k-2}\colon H^{1,2}(X)\to H^{1, 2k-2}(X)$ is injective, and so $H^{1,2k-2}(X) \neq 0$.
\end{proof}

\section*{Acknowledgements} 
\noindent We wish to thank Oliver Debarre for the reference \cite{Borel}, Andrey Soldatenkov for comments on a first version of this note, and an anonymous referee for helpful remarks.

\bibliography{HyperHM}
\bibliographystyle{mrl}
\bibliographystyle{plain}
\end{document}